\newtheorem{theorem}{Theorem} \newtheorem{lemma}{Lemma}
\newtheorem{proposition}{Proposition} 
\newtheorem{corollary}{Corollary} 
\newtheorem{definition}{Definition}
\theoremstyle{definition} 
\newtheorem{remark}{Remark}
 \newcommand{\Lip}{\mathrm{Lip}\,}
\newcommand{\loc}{\mathrm{loc}}
\newcommand{\dist}{\operatorname{dist}} \newcommand{\supp}{\operatorname{supp}}
\newcommand{\e}{\varepsilon}
\newcommand{\R}{\mathbb{R}}
\renewcommand{\d}{\mathrm{d}} \renewcommand{\L}{\mathbb{L}}
 \newcommand\sgn{\mathrm{sgn}}
 \renewcommand{\H}{\mathcal{H}}
\renewcommand{\L}{{\mathcal L}}
\renewcommand{\div}{\mathrm{div}\,}
\newcommand{\cof}{\mathrm{cof}\,}
\renewcommand{\j}{\mathbf{j}}
\newcommand{\eps}{\varepsilon}
\newcommand{\Lm}{\mathcal{L}}
\newcommand{\crit}{\frac{n-1}{n}}
\title{Coarea formulae and chain rules for the Jacobian determinant in fractional Sobolev spaces} \date{\today} 
\author[P. Gladbach]{Peter Gladbach}
\author[H. Olbermann] {Heiner Olbermann} 
\begin{document}
\maketitle

\begin{abstract}
We prove weak and strong versions of the coarea formula and the chain rule for
distributional Jacobian determinants $Ju$ for functions $u$ in  fractional Sobolev
spaces $W^{s,p}(\Omega)$, where $\Omega$ is a bounded domain in $\R^n$ with
 smooth boundary. The weak
forms of the formulae are proved for the range $sp>n-1$, $s> \crit$,
while the strong versions are proved for the range $sp\geq n$, $s\geq \frac{n}{n+1}$. We also provide a chain rule for distributional Jacobian determinants of H\"older functions and point out its relation to two open problems in geometric analysis.
\end{abstract}

\section{Setting and statement of main result}

Let $n\geq 2$, and $\Omega\subset\R^n$ bounded and open with smooth boundary. The fractional Sobolev space
$W^{s,p}(\Omega)$ with $s\in (0,1)$, $p\in [1,\infty)$ is defined as the set
of functions $u\in L^p(\Omega)$ such that
\[
[u]_{W^{s,p}}^p \coloneqq \int_{\Omega\times\Omega}\frac{|u(x)-u(y)|^p}{|x-y|^{n+sp}}\d x\d
y<\infty\,.
\]
The norm on $W^{s,p}(\Omega)$ is defined by $\|u\|_{W^{s,p}}=\|u\|_{L^p}+[u]_{W^{s,p}}$. For vector-valued functions $W^{s,p}(\Omega;\R^m)$ is defined componentwise.


Here we are concerned with the \emph{distributional Jacobian} $Ju$ for $u\in
W^{\crit,n}(\Omega;\R^n)$. For smooth $u$, this may be
defined as follows: We set
\[
\j u(x) \coloneqq \frac{1}{n}(\cof \nabla u(x))^Tu(x)\,,
\]
where $\cof \nabla u(x)\in \R^{n\times n}$ denotes the cofactor matrix of $\nabla u(x)$. 
  Then we see that $\det \nabla u = \div \j u$, so that we have
\[
  \begin{split}
    \left<J u,\psi\right>&\coloneqq \int_\Omega\det\nabla u (x)\psi(x)\d x\\
    &=\int_\Omega \div \j u(x) \psi(x) \d x\\
    &= -\int_\Omega
    \j u(x)\cdot\nabla \psi(x)\d x
  \end{split}
\]
for $\psi \in C^1_c(\Omega)$.
For $u\in
L^\infty\cap W^{1,n-1}(\Omega;\R^n)$, we may obviously define $\j u$ as an
object in 
$L^1(\Omega;\R^n)$, and $Ju$ is defined distributionally via $Ju=\div \j u$. In
this paper, we follow \cite{MR2810795} to obtain a well-defined notion of $\j u\in L^1(\Omega;\R^n)$ for $u$ in $W^{\crit,n}(\Omega;\R^n)$ through multilinear interpolation, see Section \ref{sec:prerequisites}.
The thus defined distribution $Ju=\div \j u$  is our object of investigation; we are going to derive coarea type formulae
and chain rules for it, supposing  $u$ belongs to  a certain family of subspaces of $W^{\crit,n}(\Omega;\R^n)$. 

To be more precise, let
$u^a(x) \coloneqq \frac{u(x)-a}{|u(x)-a|}$ for  $a\in \R^n$. We will prove the following theorems:
\begin{theorem}[Weak coarea formula and chain rule]\label{theorem: weak coarea chain}
  Let  $s\in( \crit,1)$,  $sp\in (n-1,\infty]$, and $u\in W^{s,p}(\Omega;\R^n)$. Then for almost every $a\in \R^n$, we
  have $u^a\in W^{\crit,n}(\Omega;\R^n)$ and the following two statements hold:
  \begin{itemize}
  \item[(i)] (Weak coarea formula) For every $\psi\in C^1_c(\Omega)$, we have
    \[
    \left<Ju,\psi\right>=\frac{1}{\omega_n}\int_{\R^n}\left<Ju^a,\psi\right>\d
    a\,.
    \]
\item[(ii)]  (Weak chain rule)  For every $F\in C^1(\R^n;\R^n)$, and every $\psi\in C^1_c(\Omega)$, we have
    \[
    \left<J(F\circ u),\psi\right>=\frac{1}{\omega_n}\int_{\R^n}\det \nabla F(a) \left<Ju^a,\psi\right>\d a\,.
    \]
  \end{itemize}
  
  Here $\omega_n = \Lm^n(B(0,1))$ is the volume of the $n$-dimensional unit ball.
\end{theorem}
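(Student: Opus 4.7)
The strategy is the standard density scheme: verify both identities for $u\in C^\infty(\bar\Omega;\R^n)$ by a topological/area-formula computation, and then extend to $u\in W^{s,p}$ by approximation, using a Fubini-type estimate to control the family $\{u^a\}_{a\in\R^n}$ in $W^{\crit,n}$ uniformly along the approximating sequence. For smooth $u$ and a generic $a\in\R^n$ (a regular value of $u$ with $a\notin u(\partial\Omega)$), $u^{-1}(a)$ is finite, and near each $x_\ast\in u^{-1}(a)$ the map $u^a$ is modeled by the normalization of the invertible linear map $\nabla u(x_\ast)(x-x_\ast)$, producing a topological singularity of degree $\sgn(\det\nabla u(x_\ast))$. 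Consequently
\[
Ju^a=\omega_n\sum_{x\in u^{-1}(a)}\sgn(\det\nabla u(x))\,\delta_x,
\]
and the area formula yields $\tfrac{1}{\omega_n}\int_{\R^n}\langle Ju^a,\psi\rangle\,\d a=\int_\Omega\psi\det\nabla u\,\d x=\langle Ju,\psi\rangle$, which is~(i). For~(ii) one inserts $\det\nabla F(a)$ and combines the area formula with the chain rule $\det\nabla(F\circ u)=\det\nabla F(u)\det\nabla u$.

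\textbf{A.e.\ regularity via Fubini.} The elementary bound $|u^a(x)-u^a(y)|\leq\min\{2,\,2|u(x)-u(y)|/|u(x)-a|\}$, interpolated with exponent $\alpha\coloneqq(n-1)/s$ (so that $\alpha\in(n-1,n)$ since $s>\crit$, and $\alpha\leq p$ since $sp\geq n-1$), gives
\[
|u^a(x)-u^a(y)|^n\leq 2^n\,\frac{|u(x)-u(y)|^\alpha}{|u(x)-a|^\alpha}.
\]
Since $\alpha<n$, $\sup_{x\in\Omega}\int_{B_R(0)}|u(x)-a|^{-\alpha}\,\d a<\infty$, and Fubini (using $s\alpha=n-1$) produces
\[
\int_{B_R(0)}[u^a]_{W^{\crit,n}}^n\,\d a\leq C_R\int\int_{\Omega\times\Omega}\frac{|u(x)-u(y)|^\alpha}{|x-y|^{n+s\alpha}}\,\d x\,\d y,
\]
which a careful Sobolev-type argument shows is finite for $u\in W^{s,p}$ on the bounded domain $\Omega$. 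Letting $R\to\infty$ proves $u^a\in W^{\crit,n}$ for a.e.\ $a\in\R^n$.

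\textbf{Density limit and chain rule.} Take smooth $u_k\to u$ in $W^{s,p}$. The left-hand side of~(i) passes to the limit by continuity of $u\mapsto Ju$ on $W^{s,p}$, recalled in Section~\ref{sec:prerequisites}. For the right-hand side, the Fubini bound yields uniform-in-$k$ control $\sup_k\int_{B_R}|\langle Ju_k^a,\psi\rangle|\,\d a<\infty$; combining the multilinear continuity $\|\j v-\j w\|_{L^1}\leq C([v]_{W^{\crit,n}}+[w]_{W^{\crit,n}})^{n-1}[v-w]_{W^{\crit,n}}$ with H\"older in $a$ reduces $\int_{B_R}|\langle Ju_k^a-Ju^a,\psi\rangle|\,\d a\to 0$ to the bilinear Fubini bound $\int_{B_R}[u_k^a-u^a]_{W^{\crit,n}}^n\,\d a\to 0$, which is proved by applying the same interpolation/Fubini scheme to the difference $u_k^a-u^a$. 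Tails $|a|>R$ are handled by $|u(x)-a|^{-\alpha}\lesssim|a|^{-\alpha}$ on sublevel sets $\{|u|<T\}$ and vanish as $R\to\infty$. The chain rule~(ii) follows by the same scheme with the locally bounded factor $\det\nabla F(a)$ carried through each step.

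\textbf{Main obstacle.} The technical core is the bilinear Fubini estimate $\int_{B_R}[u_k^a-u^a]_{W^{\crit,n}}^n\,\d a\to 0$: one needs a sharp pointwise bound for $|u_k^a(x)-u_k^a(y)-u^a(x)+u^a(y)|$ tracking both denominators $|u_k(x)-a|$ and $|u(x)-a|$, and conversion of $W^{s,p}$-convergence of $u_k\to u$ into convergence of the mixed integral produced by Fubini. The regime $p<n$ is particularly delicate since the seminorm $[u]_{W^{s,\alpha}}^\alpha$ with $\alpha<p$ is not directly controlled by $[u]_{W^{s,p}}$ via a simple embedding.
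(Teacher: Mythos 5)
Your outline of the smooth case and the density scheme matches the paper's, but there are two genuine gaps, one of which is structural.

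First, the choice $\alpha=(n-1)/s$ is exactly the critical exponent, and the resulting bound $\int_{B_R}[u^a]_{W^{\crit,n}}^n\,\d a\lesssim C_R[u]_{W^{s,\alpha}}^\alpha$ requires $W^{s,p}(\Omega)\subset W^{s,\alpha}(\Omega)$ with $\alpha<p$ and the \emph{same} smoothness $s$; this inclusion is false even on bounded domains. (The naive H\"older argument produces a residual factor $\int_\Omega\int_\Omega|x-y|^{-n}\,\d x\,\d y=\infty$; in Besov language the failure is the falsity of $\ell^p\subset\ell^q$ for $q<p$.) You acknowledge the difficulty, but the phrase ``a careful Sobolev-type argument shows is finite'' does not resolve it. The paper's Lemma~\ref{lemma: u^a continuity} does not interpolate exactly: it keeps the power $n$, accepts the resulting logarithm, and then absorbs it into $\max(|u(x)-u(y)|^{n-\e},|u(x)-u(y)|^{n+\e})$, which lands in $W^{\frac{n-1}{n\pm\e},n\pm\e}$ with smoothness \emph{strictly below} $s$; only then does the embedding hold on a bounded domain. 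Your argument would be salvaged by taking $\alpha'\in\bigl((n-1)/s,\min(n,p)\bigr)$ strictly larger than $(n-1)/s$, so the target smoothness $(n-1)/\alpha'<s$ and the integrability parameter $\alpha'<p$ both drop; this is implicitly what the paper's $\e$-trick achieves.

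Second, and more seriously, your passage to the limit has no control on the tails $|a|>R$, and the argument you sketch cannot produce it. The bound $|u(x)-a|^{-\alpha}\lesssim|a|^{-\alpha}$ with $\alpha<n$ is \emph{not} integrable at infinity, so the claimed vanishing as $R\to\infty$ does not follow; the slightly sharper estimates $|\langle Ju^a,\psi\rangle|\lesssim|a|^{-1}$ or $\lesssim|a|^{-(n-1)}$ obtained from $|\j u^a|\lesssim|\nabla u|^{n-1}/|a|^{n-1}$ are also non-integrable. For smooth $u$ the integrand vanishes exactly for $|a|$ beyond the range of $u$, but this cancellation is topological and invisible to any norm estimate for $\j u^a$. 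This is precisely why the paper introduces Lemma~\ref{lemma: Ju_k^a limit}: by choosing the \emph{specific} approximants $u_k=U(\cdot,t_k)$ coming from the extension $U\in W^{1,n}(\Omega\times[0,1))$, the atomic structure $Ju_k^a=\omega_n\sum_{x\in u_k^{-1}(a)}\sigma(x)\delta_x$ together with the classical coarea formula $\int_{\R^n}\H^1(U^{-1}(a))\,\d a\leq\|U\|_{W^{1,n}}^n$ yields a Cauchy estimate for $a\mapsto Ju_k^a$ in $L^1(\R^n;W^{-1,1}(\Omega))$, i.e.\ a genuine \emph{global} $L^1$ bound in $a$. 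Without this (or an equivalent uniform-integrability argument over all of $\R^n$) the identification $\lim_k\int_{\R^n}\langle Ju_k^a,\psi\rangle\,\d a=\int_{\R^n}\langle Ju^a,\psi\rangle\,\d a$ remains unproved; pointwise convergence plus local control is insufficient.
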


\begin{remark}
The validity of the weak coarea formula and chain rule for the critical case
$u\in W^{\crit,n}(\Omega;\R^n)$ is not treated here and remains an open question. 

\end{remark}

In our second theorem we are going to use the following notation: For  a  distribution $T:C^1_c(\Omega)\to\R$,
we denote its total variation by
\[
  |T|_{TV}\coloneqq\sup\{\left<T,\psi\right>:\psi\in C^1_c(\Omega),\,|\psi|\leq 1\}\,.
\]
By the
Riesz-Radon representation theorem, $T$ can
be extended to a Radon measure on $\Omega$ if $|T|_{TV}<\infty$.

\begin{theorem}[Strong coarea formula and chain rule]
\label{thm:strong}
  Let $u\in C^0\cap W^{\frac{n}{n+1},n+1}(\Omega;\R^n)$. Assume
  that $|Ju|_{TV}< \infty$ . Then the following two statements hold:
  \begin{itemize}
\item[(i)] (Strong coarea formula.)  For almost every $a\in\R^n$, $Ju^a$ can
  be extended to a 
  Radon measure and 
    \[
    \left|J
      u\right|_{TV}=\frac{1}{\omega_n}\int_{\R^n} 
\left|Ju^a\right|_{TV}\d a\,.
    \]
  \item[(ii)](Strong chain rule.) Let $F\in C^\infty(\R^n;\R^n)$ be globally
    Lipschitz. Then ${|J(F\circ u)|_{TV} < \infty}$, and for every $\psi\in C^1_c(\Omega)$, we have
    that 
    \[
    \left<J(F\circ u),\psi\right>=\int_\Omega \det \nabla F(u(x))\psi(x)\d Ju(x)\,.
    \]
  \end{itemize}
\end{theorem}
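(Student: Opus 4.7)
The plan is to carry out three stages: the easy half of (i), the chain rule (ii), and finally the hard half of (i).

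\textbf{Easy half of (i).} For every $\psi\in C^1_c(\Omega)$ with $|\psi|\le1$, Theorem~\ref{theorem: weak coarea chain}(i) gives $\langle Ju,\psi\rangle=\omega_n^{-1}\int_{\R^n}\langle Ju^a,\psi\rangle\,\mathrm{d}a\le\omega_n^{-1}\int_{\R^n}|Ju^a|_{TV}\,\mathrm{d}a$; the supremum over $\psi$ yields the desired inequality. Measurability of $a\mapsto|Ju^a|_{TV}$ follows by writing it as a supremum over a countable dense family $\{\phi_k\}$ in the unit ball of $C^1_c(\Omega)$ of the measurable maps $a\mapsto\langle Ju^a,\phi_k\rangle=-\int \j u^a\cdot\nabla\phi_k\,\mathrm{d}x$.

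\textbf{Chain rule (ii).} For (ii) I work with mollifications $u_\eps=u\ast\rho_\eps\in C^\infty$, for which the classical pointwise identity $J(F\circ u_\eps)=\det\nabla F(u_\eps)\,Ju_\eps$ holds as smooth measures. Passing $\eps\to0$ requires three convergences: (a) locally uniform $\det\nabla F(u_\eps)\to\det\nabla F(u)$, from $u\in C^0$ and uniform convergence $u_\eps\to u$; (b) weak-$\ast$ convergence $Ju_\eps\wsto Ju$ of Radon measures, combining distributional convergence with the uniform TV bound $\sup_\eps|Ju_\eps|_{TV}<\infty$ (a consequence of $|Ju|_{TV}<\infty$ together with a strict-convergence argument for the mollification); (c) distributional convergence $J(F\circ u_\eps)\to J(F\circ u)$, from continuity of $J$ on $W^{n/(n+1),n+1}$ together with $F\circ u_\eps\to F\circ u$ in that space (using $F$ globally Lipschitz). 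Combining these gives $\langle J(F\circ u),\psi\rangle=\int_\Omega\det\nabla F(u(x))\,\psi(x)\,\mathrm{d}Ju(x)$, and the bound $|J(F\circ u)|_{TV}\le\|\det\nabla F\|_\infty|Ju|_{TV}$ is immediate.

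\textbf{Hard half of (i).} Combining (ii) with Theorem~\ref{theorem: weak coarea chain}(ii) yields
\[\omega_n^{-1}\int_{\R^n}\det\nabla F(a)\,\langle Ju^a,\psi\rangle\,\mathrm{d}a=\int_\Omega\det\nabla F(u(x))\,\psi(x)\,\mathrm{d}Ju(x)\]
for all smooth Lipschitz $F$ and $\psi\in C^1_c(\Omega)$. A density argument—approximating bounded Borel $g:\R^n\to\R$ by Jacobians $\det\nabla F$ in $L^1(u_\ast|Ju|)$, using solvability of the prescribed-Jacobian equation $\det\nabla F=g$ for mean-zero $g$ plus constants—extends the identity to
\[\omega_n^{-1}\int_{\R^n}g(a)\,\langle Ju^a,\psi\rangle\,\mathrm{d}a=\int_\Omega g(u(x))\,\psi(x)\,\mathrm{d}Ju(x),\qquad g\in L^\infty(\R^n).\]
For the measurable selection, fix $\eta,M>0$, let $\{\phi_k\}$ be dense in the unit ball of $C^1_c(\Omega)$, and define the disjoint Borel partition $A_k\coloneqq\{a:\langle Ju^a,\phi_k\rangle\ge(1-\eta)\min(|Ju^a|_{TV},M)\}\setminus\bigcup_{j<k}A_j$. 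Substituting $g=\chi_{A_k}$, $\psi=\phi_k$, summing over $k$, and using the pointwise bound $|\sum_k\chi_{A_k}(u(x))\phi_k(x)|\le 1$, I obtain
\[(1-\eta)\int_{\R^n}\min(|Ju^a|_{TV},M)\,\mathrm{d}a\le\omega_n|Ju|_{TV},\]
and letting $\eta\to0$, $M\to\infty$ completes (i).

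The main obstacles are: upgrading the distributional convergence $Ju_\eps\to Ju$ to weak-$\ast$ measure convergence in step (b) of the chain rule, which hinges on a strict-convergence approximation result for mollifications compatible with the TV bound; and the density of the class $\{\det\nabla F:F\in C^\infty(\R^n;\R^n)\text{ Lipschitz}\}$ in bounded Borel functions modulo the pushforward $u_\ast|Ju|$, needed to extend the chain-rule identity to characteristic functions in the third stage.
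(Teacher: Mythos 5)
Your plan for part (ii) has a genuine gap at step (b). You assert that $\sup_\eps|Ju_\eps|_{TV}<\infty$ follows from $|Ju|_{TV}<\infty$ ``together with a strict-convergence argument for the mollification,'' but lower semicontinuity of total variation under distributional convergence goes the wrong way: it gives $|Ju|_{TV}\le\liminf_\eps|Ju_\eps|_{TV}$ and says nothing about a uniform upper bound. There is no a priori reason why $\int_\Omega|\det\nabla u_\eps|\,\d x$ should stay bounded just because the limiting distribution has finite mass; the $L^1$-norms of the smooth Jacobians could blow up with cancellations producing the finite limit. The paper sidesteps this entirely: since $Ju\in(W^{\frac{n}{n+1},n+1}(\Omega))^*$ (the corollary to Proposition \ref{prop:distrib}), one writes
$\langle J(F\circ u_k),\psi\rangle=\langle Ju_k,\psi\,(\det\nabla F)\circ u_k\rangle$
and passes to the limit using operator-norm convergence $Ju_k\to Ju$ in the dual together with strong convergence of $\psi(\det\nabla F)\circ u_k$ in $W^{\frac{n}{n+1},n+1}$. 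No weak-$*$ measure convergence and hence no uniform TV bound is needed. If you replace step (b) by this pairing argument, your (ii) goes through with your steps (a) and (c) essentially intact.

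For part (i), your route is genuinely different from the paper's: there, the strong coarea formula is obtained by invoking Theorem 13 of de Lellis \cite{de2003some} (noting that its proof uses only $|Ju|_{TV}<\infty$ and the chain rule identity, not $u\in L^\infty\cap W^{1,n-1}$). Your self-contained argument—combine the proven (ii) with Theorem \ref{theorem: weak coarea chain}(ii) to get
$\omega_n^{-1}\int g(a)\langle Ju^a,\psi\rangle\,\d a=\int g(u)\psi\,\d Ju$
for $g=\det\nabla F$, extend to bounded Borel $g$ by density, then make a near-optimal measurable selection of test functions $\phi_k$—is an attractive alternative and the measurable-selection step is sound. However, your description of the density step is off. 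Invoking a Dacorogna--Moser solvability result for $\det\nabla F=g$ with mean-zero $g$ is both unnecessary and not obviously compatible with the global Lipschitz requirement. The simple observation is that for any $g\in C^\infty_c(\R^n)$ one may take $F(y)=\bigl(\int_{-\infty}^{y_1}g(s,y_2,\dots,y_n)\,\d s,\,y_2,\dots,y_n\bigr)$, which is smooth, globally Lipschitz, and has $\det\nabla F=g$; then extend to bounded Borel $g$ by dominated convergence, using that $a\mapsto\langle Ju^a,\psi\rangle$ is in $L^1(\R^n)$ (Theorem \ref{theorem: weak coarea chain}) and that $|Ju|$ is a finite measure. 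You should also spell out that the exhaustion $\bigcup_k A_k=\R^n$ up to a null set uses $C^1$-continuity of $\phi\mapsto\langle Ju^a,\phi\rangle$ for $u^a\in W^{\frac{n-1}{n},n}$ (Proposition \ref{prop:distrib}) together with a $C^1$-dense family $\{\phi_k\}$, which your phrasing leaves implicit.
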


Weak and strong versions of the coarea formulae and chain rule for the Jacobians of (non-fractional) Sobolev functions have been treated before in \cite{MR1911049,de2003some}.

\medskip

Finally, in Section \ref{sec:chain-rules-holder} below,  we want to point out
the relevance of chain rules for two well-known open problems in geometric
analysis: the $C^{1,\alpha}$ isometric immersion problem and the H\"older
embedding problem for the Heisenberg group. In order to avoid any
misunderstanding, let us state clearly that we do not have any new results on
these problems, but only offer a new vantage point. In order to do so, we
formulate a chain rule for H\"older functions $u$, which allows for a slightly
larger domain of $Ju$. In particular, we may define it on  characteristic functions $\chi_E$ of sets of finite perimeter $E$ such that $\overline E\subset \Omega$.

\begin{theorem}
  \label{thm:hoelder}
  Let $\alpha\in (\crit,1)$, $u\in C^{0,\alpha}(\Omega;\R^n)$, $F\in C^1(\R^n;\R^n)$ globally Lipschitz and $E$ as above. Then
    \[
    \left<J(F\circ u),\chi_E\right>=
    \frac{1}{\omega_n}\int_{\R^n}\det\nabla F(a)\left<Ju^a,\chi_E\right>\d a\,.
  \]
  \end{theorem}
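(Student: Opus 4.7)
The strategy is to apply Theorem \ref{theorem: weak coarea chain}(ii) after approximation of $\chi_E$. Since $\alpha>\crit$, we may pick $s\in(\crit,\alpha)$ and $p\geq n$ so large that $sp>n-1$; the standard embedding $C^{0,\alpha}(\Omega)\hookrightarrow W^{s,p}(\Omega)$ then places $u$ in the hypothesis of Theorem \ref{theorem: weak coarea chain}(ii), which thus gives, for all $\psi\in C^1_c(\Omega)$,
\[
\langle J(F\circ u),\psi\rangle=\frac{1}{\omega_n}\int_{\R^n}\det\nabla F(a)\langle Ju^a,\psi\rangle\d a.
\]
Since $E$ has finite perimeter with $\overline E\subset\Omega$, there exists a sequence $\psi_k\in C^\infty_c(\Omega)$ with $0\leq\psi_k\leq 1$, $\psi_k\to\chi_E$ pointwise a.e., and $\int_\Omega|\nabla\psi_k|\d x\to P(E)$ (constructed e.g.\ by convolution of $\chi_E$ with a mollifier followed by a smooth cutoff in a collar of $\partial\Omega$).

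The main obstacle is to establish a H\"older--Jacobian stability estimate of the form
\[
|\langle Jv,\psi\rangle|\leq C\|v\|_{C^{0,\alpha}}^n\int_\Omega|\nabla\psi|\d x,\qquad v\in C^{0,\alpha}(\Omega;\R^n),\ \psi\in C^1_c(\Omega),
\]
valid for $\alpha>\crit$. The natural approach is a telescoping argument: for $v_\delta=v*\rho_\delta$ the smooth Jacobian $Jv_\delta=\det\nabla v_\delta\cdot\Lm^n$ is classical, and after integration by parts one bounds $|\langle Jv_{2\delta}-Jv_\delta,\psi\rangle|\leq C\delta^{n\alpha-(n-1)}\|v\|_{C^{0,\alpha}}^n\int_\Omega|\nabla\psi|\d x$; the exponent $n\alpha-(n-1)>0$ reflects precisely the hypothesis $\alpha>\crit$, so summing in $\delta=2^{-j}$ gives a geometric series. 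The estimate implies that, for every $v\in C^{0,\alpha}(\Omega;\R^n)$, the limit $\langle Jv,\chi_E\rangle\coloneqq\lim_k\langle Jv,\psi_k\rangle$ exists and is independent of the approximating sequence, and that $v\mapsto\langle Jv,\psi_k\rangle$ is uniformly continuous on bounded subsets of $C^{0,\alpha}(\Omega;\R^n)$.

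Applied to the LHS with $v=F\circ u$ (noting $\|F\circ u\|_{C^{0,\alpha}}\leq\|F\|_{\Lip}\|u\|_{C^{0,\alpha}}$), this yields $\langle J(F\circ u),\psi_k\rangle\to\langle J(F\circ u),\chi_E\rangle$. For the RHS one has the pointwise convergence $\langle Ju^a,\psi_k\rangle\to\langle Ju^a,\chi_E\rangle$ for almost every $a$: outside the Lebesgue-null set $u(\partial^*E)$ (null since $u$ is H\"older with $\alpha>\crit$, so $\dim_\H u(\partial^*E)\leq(n-1)/\alpha<n$), $u^a$ is continuous near $\partial^*E$, and the pairing is identifiable with $\omega_n\deg(u,E,a)$, the Brouwer degree of $u$ on $E$ at $a$. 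Dominated convergence then applies once one provides an $L^1_a$-integrable majorant, which may be obtained either by applying the stability estimate to $u^a$ together with a careful split of the $a$-integral between regions where $a$ is far from and close to $u(\supp\psi_k)$, or by a direct coarea-type bound on $\int_{\R^n}|\langle Ju^a,\psi_k\rangle|\d a$ uniform in $k$. Combined with $|\det\nabla F(a)|\leq\|F\|_{\Lip}^n$, this passes the RHS to its claimed value, completing the proof.
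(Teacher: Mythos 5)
Your strategy is genuinely different from the paper's: the paper mollifies $u$ (keeping $\chi_E$ fixed), uses the classical change-of-variables formula for the smooth $u_\e$, and then invokes the $L^1$-convergence of the Brouwer degrees $\deg(u_\e,E,\cdot)$ (Theorem~1.2 of \cite{olbermann2015integrability}); you instead mollify $\chi_E$ (keeping $u$ fixed) and apply the already-established weak chain rule, Theorem~\ref{theorem: weak coarea chain}(ii), to each smooth $\psi_k$. This is an attractive alternative because it bypasses the smooth change-of-variables step entirely. However, the right-hand side limit is where the real difficulty sits, and there you leave a genuine gap.

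Concretely, you need an $L^1_a$-integrable majorant for $a\mapsto|\langle Ju^a,\psi_k\rangle|$ uniform in $k$, and you only sketch two possible routes without carrying either out. The first route cannot work as stated: the H\"older stability estimate bounds $|\langle Ju^a,\psi_k\rangle|$ by $\|u^a\|_{C^{0,\alpha}}^n\,\|\nabla\psi_k\|_{L^1}$, and $\|u^a\|_{C^{0,\alpha}}$ blows up like $\dist(a,u(\supp\psi_k))^{-1}$, so the bound grows like $\dist(a,u(\supp\psi_k))^{-n}$, which is not integrable in $a$ near the $\Lm^n$-null set $u(\supp\psi_k)$. The second route, a coarea-type bound on $\int_{\R^n}|\langle Ju^a,\psi_k\rangle|\d a$ uniform in $k$, is essentially equivalent to establishing $\deg(u,E,\cdot)\in L^1(\R^n)$ for H\"older $u$ with $\alpha>\crit$ --- which is precisely the nontrivial external result the paper cites. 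So your proof, if completed honestly, would end up needing the same input. A smaller issue: from the stability estimate $|\langle Jv,\psi\rangle|\lesssim\|v\|_{C^{0,\alpha}}^n\|\nabla\psi\|_{L^1}$ alone it does not follow that $\lim_k\langle Jv,\psi_k\rangle$ exists, since $\nabla\psi_k$ is not Cauchy in $L^1$ when $D\chi_E$ is singular; the correct argument (already contained in Definition~\ref{def:Ju}) also mollifies $v$ and uses weak-$\ast$ convergence $\nabla\psi_k\,\dL^n\wsto D\chi_E$ against the continuous integrand $\j v_\delta$. You should cite Definition~\ref{def:Ju} directly rather than re-derive the extension from the stability estimate.

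Finally, your identification $\langle Ju^a,\chi_E\rangle=\omega_n\deg(u,E,a)$ for a.e.\ $a$ is asserted but not proved; for merely H\"older $u$ this identification is exactly what the paper's argument via $\j u^a\in C^0(\Omega\setminus u^{-1}(a))$ and Lemma~\ref{lem:estim_bdry_holder} establishes, and it cannot be taken for granted.
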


  \subsection*{Plan of the paper}
  In Section \ref{sec:prerequisites}, we prove the estimate that defines the distributional Jacobian $Ju$ for $u\in W^{\crit,n}(\Omega;\R^n)$. Theorem \ref{thm:strong} and \ref{thm:hoelder} will be fairly straightforward consequences of these estimates, and they will be proved in Sections \ref{sec:proof-strong-chain} and \ref{sec:proof-chain-rule-hoelder} respectively. The proof of the weak chain rule (Theorem \ref{theorem: weak coarea chain}) is the most interesting, and will be carried out first,  in Section \ref{sec:proof-weak-chain}. In Section \ref{sec:two-open-questions}, we will explain the relation between chain rules and the  two open problems in geometric analysis mentioned above.

  \subsection*{Some notation}
The symbol $D$ denotes the distributional derivative, while $\nabla=(\partial_1,\dots,\partial_n)$ is the approximate gradient.
The function spaces $W^{s,p}(\Omega;\R^n)$ will be abbreviated by $W^{s,p}$ when it is clear from the context what is meant. 
  The  H\"older spaces $C^{0,\alpha}(\Omega;\R^m)$ are defined through the norm
\[
  \|u\|_{C^{0,\alpha}} \coloneqq \|u\|_{C^0}+\sup_{x\neq y}\frac{|u(x)-u(y)|}{|x-y|^\alpha}\,\,,
    \]
and $C^{0,\alpha}(\Omega;\R^m)\coloneqq \{u\in C^0(\Omega;\R^m):\|u\|_{C^{0,\alpha}}<\infty\}$. The space $BV(\Omega)$ is the set of all functions $u\in L^1(\Omega)$ such that
\[
  \sup\left\{  \int_\Omega u(x)\div\psi(x)\d x:\psi\in C^1_c(\Omega),\,|\psi|\leq 1\right\}<\infty\,.
\]
For $u\in BV(\Omega)$, the distributional derivative $Du$ is understood as a matrix-valued Radon measure.
The symbol ``$C$'' is used as follows: A statement such as  $f\leq C(\alpha)g$ is to be read as ``there exists a constant $C>0$ that depends only on $\alpha$ such that $f\leq C g$''. For $f\leq Cg$, we also write $f\lesssim g$.  The $k$-dimensional Hausdorff measure is denoted by $\H^k$. The Brouwer degree of  $u\in C^0(\Omega;\R^n)$ in $a\in\R^n\setminus u(\partial\Omega)$ is denoted by $\deg(u,\Omega,a)$.

\section{Estimate defining the distributional Jacobian}

\label{sec:prerequisites}




We define the distributional Jacobian $J u$ as in \cite{MR2810795}.
This is equivalent to a definition through multilinear interpolation by the method of trace spaces (see e.g.~Chapter 28 of \cite{MR2328004}).

\medskip

For smooth functions $\j u=\frac1n (\cof\nabla u)^T u$ has been defined above, and $Ju = \div \j u$. The following identity will be useful in the sequel: For $u\in C^1(\Omega;\R^n)$, $\psi\in C^1(\Omega)$, we have
  \begin{equation}\label{eq:8}
  \sum_{j=1}^n (\cof\nabla u)_{ij}\partial_{x_j} \psi=\det\left(\nabla u_1,\dots,\nabla u_{i-1},\nabla \psi,\nabla u_{i+1},\dots,\nabla u_n\right)\,.
\end{equation}

\medskip

The following proposition is a statement of three estimates for $\j u$, the
first of which can be found as  Lemma 4 in \cite{MR2810795}; we use the same method of proof. 

    \begin{proposition}
      \label{prop:distrib}
  Let $\Omega\subset \R^n$ be open bounded with Lipschitz boundary, and $\alpha\in\left(\crit,1\right)$. Then for all $u,v\in C^1(\Omega;\R^n)$ and every $\psi\in C^1_c(\Omega)$, we have that
  \[
    \begin{split}
      \Big| \Big<Ju-Jv,\psi\Big>&\Big|\\
      \lesssim\min\Bigg(& \|u-v\|_{W^{\frac{n-1}{n},n}}\left(\|u\|_{W^{\frac{n-1}{n},n}}+\|v\|_{W^{\frac{n-1}{n},n}}\right)^{n-1}\|\nabla\psi\|_{L^\infty}\,,\\
& \|u-v\|_{W^{\frac{n}{n+1},n+1}}\left(\|u\|_{W^{\frac{n}{n+1},n+1}}+\|v\|_{W^{\frac{n}{n+1},n+1}}\right)^{n-1}\|\psi\|_{W^{\frac{n}{n+1},n+1}}\,,\\
      & \|u-v\|_{C^{0,\alpha}}\left(\|u\|_{C^{0,\alpha}}+\|v\|_{C^{0,\alpha}}\right)^{n-1}\|\nabla\psi\|_{L^1}\Bigg)\,.
    \end{split}
    \]

\end{proposition}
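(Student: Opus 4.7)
All three estimates follow from a common representation for $\langle Ju-Jv,\psi\rangle$ obtained for smooth $u,v$. Starting from
\[
\langle Ju-Jv,\psi\rangle=\int_\Omega(\det\nabla u-\det\nabla v)\,\psi\,\d x,
\]
multilinearity of the determinant gives the telescoping identity
\[
\det\nabla u-\det\nabla v=\sum_{k=1}^n\det\bigl(\nabla v_1,\ldots,\nabla(u_k-v_k),\ldots,\nabla u_n\bigr),
\]
with $\nabla(u_k-v_k)$ in the $k$-th row, and one integration by parts using the null-Lagrangian identity \eqref{eq:8} shifts the derivative off $u_k-v_k$ onto $\psi$, producing
\[
\langle Ju-Jv,\psi\rangle=-\sum_{k=1}^n\int_\Omega(u_k-v_k)(x)\,\det M^{(k)}(x)\,\d x,
\]
where the $n\times n$ matrix $M^{(k)}$ has $\nabla\psi$ in its $k$-th row and rows $\nabla u_j$ or $\nabla v_j$ in the remaining slots.

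The three bounds differ in how one then represents the $n-1$ gradient factors $\nabla u_j,\nabla v_j$ (and, for the second estimate, also $\nabla\psi$) as formal divided differences, and in how H\"older's inequality is distributed across the resulting difference factors. For the first estimate (Lemma~4 in \cite{MR2810795}), each $\nabla f(x)$ is written as a divided difference $(f(y)-f(x))/|y-x|$, turning the $x$-integral into a double integral over $\Omega\times\Omega$ with kernel of order $|y-x|^{-(2n-1)}$; H\"older's inequality with exponent $n$ across the $n$ difference factors coming from $u-v$ and from the components of $u,v$, combined with $\|\nabla\psi\|_{L^\infty}$ for the remaining $\nabla\psi$ factor, yields the $W^{(n-1)/n,n}$ bound. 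For the second estimate one proceeds identically but also writes $\nabla\psi$ as a divided difference of $\psi$; this changes the kernel to order $|y-x|^{-2n}$ and enables H\"older's inequality with exponent $n+1$ over $n+1$ difference factors, placing $\psi$ on the same footing as $u$ and $v$.

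For the third estimate, one keeps $\nabla\psi$ outside the double integral and bounds each divided difference pointwise via $|f(y)-f(x)|\le[f]_{C^{0,\alpha}}|y-x|^\alpha$. The $n$ difference factors then contribute a combined power $|y-x|^{n\alpha}$, which against the kernel $|y-x|^{-(2n-1)}$ and the polar volume factor $|y-x|^{n-1}$ leaves an effective radial exponent $n\alpha-n$; this is integrable near the diagonal precisely when $\alpha>(n-1)/n$, which is the hypothesis of the proposition. The outer $x$-integration is then controlled by $\|\nabla\psi\|_{L^1}$, and multiplying through by the prefactors $\|u-v\|_{C^{0,\alpha}}$ and $(\|u\|_{C^{0,\alpha}}+\|v\|_{C^{0,\alpha}})^{n-1}$ closes the estimate.

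The main obstacle I anticipate lies in making the formal divided-difference representation of the $n$-fold product of gradients rigorous and symmetric enough to support all three H\"older inequality arrangements in a uniform way; a secondary technical point is ensuring that the singular-integral argument in the third case is sharp, so that the stated threshold $\alpha>(n-1)/n$ is exactly what is required for integrability near the diagonal.
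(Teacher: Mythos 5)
Your decomposition via multilinearity and integration by parts with the null-Lagrangian identity is a legitimate first step, and your arithmetic for the $C^{0,\alpha}$ threshold ($n\alpha - n > -1 \Leftrightarrow \alpha > \crit$) matches what the hypothesis demands. But the core of your plan --- replacing each pointwise gradient $\nabla f(x)$ by a ``formal divided difference'' $(f(y)-f(x))/|y-x|$ and thereby turning the single $x$-integral into a Gagliardo-type double integral --- is not an identity, and you correctly flag this as the obstacle without resolving it. A pointwise gradient is a function of $x$ alone; a difference quotient is a function of $(x,y)$; no integration-by-parts or kernel manipulation you describe converts one into the other. Moreover, after your integration by parts the factor $(u_k-v_k)(x)$ appears \emph{undifferentiated}, so there is no derivative to ``undo'' into a difference quotient for it, yet your target bound involves the seminorm $[u-v]_{W^{\crit,n}}$, which does require a difference. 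The accounting of how $n$ difference quotients, one power of $\nabla\psi$, and the Gagliardo kernel $|x-y|^{-(2n-1)}$ are supposed to line up with $n$-fold H\"older is therefore not coherent as stated. Some version of your idea can be made rigorous via Littlewood--Paley or singular-integral commutator methods, but that is a substantial program, not a small technicality.

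The paper avoids the issue entirely by adding one dimension. It extends $u$, $v$, $\psi$ to $U$, $V$, $\Psi$ on $\Omega\times[0,1)$ by mollification at scale $t$, so that the trace estimates \eqref{eq:10}--\eqref{eq:11} convert the fractional norms of $u,v,\psi$ into \emph{classical} $W^{1,p}$ (or weighted $L^\infty$) norms of the extensions. Since $\Psi$ vanishes at $t=1$, one has $\langle Ju-Jv,\psi\rangle = -\int_0^1\partial_t\langle JU-JV,\Psi\rangle\,\d t$; differentiating in $t$ and rearranging with differential forms produces exact forms (which integrate to zero) plus genuinely $(n+1)$-linear expressions in $\tilde\nabla U$, $\tilde\nabla V$, $\tilde\nabla\Psi$, one of which is $\tilde\nabla U-\tilde\nabla V$. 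Every factor is now a \emph{bona fide} gradient in the $(n+1)$-dimensional domain, so H\"older's inequality applies directly with exponents $n$, $n+1$, or $(\infty,\dots,\infty,1)$ for the three bounds respectively, and in the H\"older case the $\int_0^1 t^{n(\alpha-1)}\,\d t$ factor reproduces exactly your threshold. In short: the paper's trade of fractional regularity in $n$ variables for integer regularity in $n+1$ variables is what makes the H\"older applications rigorous, and that step is precisely what your divided-difference sketch is missing.
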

  \begin{proof}
    Let $\tilde u,\tilde v$ be extensions of $u,v$ to $\R^n$ such that
    \[
      \begin{split}
      \|\tilde u\|_{W^{\frac{n-1}{n},n}(\R^n;\R^n)}&\lesssim \|u\|_{W^{\frac{n-1}{n},n}(\Omega;\R^n)}\\
      \|\tilde u\|_{W^{\frac{n}{n+1},n+1}(\R^n;\R^n)}&\lesssim \|u\|_{W^{\frac{n}{n+1},n+1}(\Omega;\R^n)}\\
      \|\tilde u\|_{C^{0,\alpha}(\R^n;\R^n)}&\lesssim \|u\|_{C^{0,\alpha}(\Omega;\R^n)}\,,
    \end{split}
  \]
  with analogous estimates for $\tilde v$.
Let $\eta\in C^\infty_c(\R^n)$ be a  standard mollifier, i.e., $\int_{\R^n}\eta(x)\d x=1$, and set $\eta_t \coloneqq t^{-n}\eta
(\cdot/t)$.  
   We may define extensions  $U,V:\Omega\times [0,1)\to\R^n$ of $u,v$  by setting
    \[
      U(x,t) \coloneqq (\eta_t* \tilde u)(x)\,,
\]
again with an analogous definition for $V$. We write $\tilde\nabla=(\nabla,\partial_t)$.
By well known trace estimates (see e.g.~\cite{lunardi2018interpolation,Lions1963}), the  definition above implies
  \begin{equation}
    \label{eq:10}
    \begin{split}
  \|U\|_{W^{1,n}(\Omega\times [0,1);\R^n)}&\lesssim \|u\|_{W^{\frac{n-1}{n},n}}(\Omega;\R^n)\\
  \|U\|_{W^{1,n+1}(\Omega\times [0,1);\R^n)}&\lesssim \|u\|_{W^{\frac{n}{n+1},n+1}(\Omega;\R^n)}\\
\|\tilde\nabla U(\cdot,t)\|_{L^\infty(\Omega;\R^n)}&\lesssim t^{\alpha - 1}\|u\|_{C^{0,\alpha}(\Omega;\R^n)}\,,
\end{split}
\end{equation}
with analogous estimates for $V$.
Furthermore we may extend $\psi$ to a function ${\Psi\in C^1_c(\Omega\times[0,1))}$ such that
  \begin{equation}
    \label{eq:11}
  \begin{split}
  \| \Psi\|_{C^1((\Omega\times[0,1))}&\lesssim\|\psi\|_{C^1(\Omega)}\\
  \| \Psi\|_{W^{1,n+1}(\Omega\times[0,1))}&\lesssim \| \psi\|_{W^{\frac{n}{n+1},n+1}(\Omega)}\\
    \sup_{t\in(0,1)}\|\tilde\nabla \Psi(\cdot,t)\|_{L^1(\Omega)}&\lesssim    \|\nabla \psi\|_{L^1(\Omega)}\,.
  \end{split}
\end{equation}

  Now we have, writing $JU = J(U(\cdot, t))$, that
  
  \begin{equation}\label{eq:9}
    \begin{split}
      \left<Ju-Jv,\psi\right>&=-\int_0^1\partial_t \left<JU-JV,\Psi\right>\d t\\
     & =\int_0^1\int_{\Omega}\partial_t\left( \left(\j U-\j V\right)\cdot \nabla \Psi \right) \d x\d t
    \end{split}
  \end{equation}

  We will now rewrite the expression $\partial_t \left((\j U-\j V)\cdot\nabla \Psi\right) \d x$ using differential forms,
and  claim that it can be written as a sum of
exact forms plus terms that can each be written as a product of a one-homogeneous
function in $\tilde\nabla U-\tilde \nabla V$ with an $n$-homogeneous function
in $(\tilde\nabla U,\tilde \nabla V,\tilde \nabla \Psi)$.
Indeed, with   $ U=(U_1,\dots,U_n)$ and $\d U_i=\sum_{j=1}^n \partial_j U_i\d x_j$
       we have by \eqref{eq:8} that
      \[
          \
          \begin{split}
\j U\cdot\nabla \Psi\d x_1\wedge\dots \wedge \d x_n&=
\frac{1}{n}  
\sum_{i=1}^n U_i\d U_1\wedge\dots\wedge\d U_{i-1}\wedge\d\Psi\wedge\d U_{i+1}\wedge\dots\wedge\d U_n\\
& =:\frac{1}{n}  
\sum_{i=1}^nF_i(U,\Psi)\,.
\end{split}
\]
By the product rule, and denoting derivatives with respect to $t$ by a prime, we get 

\begin{equation}
  \label{eq:6}
  \begin{split}
  \partial_t F_i(U,\Psi)
  &=U_i'\d U_1\wedge\dots\wedge\d U_{i-1}\wedge\d\Psi\wedge\d U_{i+1}\wedge\dots\wedge\d U_n\\
&\quad+\sum_{j\neq i} G_{ij}(U,\Psi)\\
  &\quad+U_i\d U_1\wedge\dots\wedge\d U_{i-1}\wedge\d\Psi'\wedge\d U_{i+1}\wedge\dots\wedge\d U_n\,,
\end{split}
\end{equation}
  where
  \[
    G_{ij}(U,\Psi)=
    U_i\d U_1\wedge\dots\wedge\d U_j'\wedge\dots\wedge\d U_{i-1}\wedge\d\Psi\wedge\d U_{i+1}\wedge\dots\wedge\d U_n\,.
\]  
We treat the terms $G_{ij}$ with an integration by parts,
  \[
    \begin{split}
    G_{ij}(U,\Psi)&=(-1)^{j-1}\d\left(U_iU_j'\d U_1\wedge\dots\wedge\d U_{j-1}\wedge\d U_{j+1}\wedge\dots\wedge\d U_{i-1}\wedge\d\Psi\wedge\d U_{i+1}\wedge\dots\wedge\d U_n\right)\\
    &\quad+U_j'\d U_1\wedge\dots\wedge\d U_{j-1}\wedge\d\Psi\wedge\d U_{j+1}\wedge\dots\wedge\d U_n\,\,.
  \end{split}
\]
In the same way,
\[
  \begin{split}
  U_i\d U_1\wedge&\dots\wedge\d U_{i-1}\wedge\d U_{i+1}\wedge\dots\wedge\d U_n\wedge\d\Psi'\\
  &=(-1)^{i-1} \d \left(U_i\Psi'\d U_1\wedge\dots\wedge\d U_{i-1}\wedge\d U_{i+1}\wedge\dots\wedge\d U_n\right)\\
  &\quad+ \Psi'\d U_1\wedge\dots\wedge\d U_n\,.
\end{split}
  \]
  Inserting these last two equations in \eqref{eq:6}, we obtain
  \[
    \begin{split}
 \partial_t\j U\cdot \nabla\Psi \d x &=\sum_{i=1}^n U_i'\d U_1\wedge \dots\wedge \d U_{i-1}\wedge\d\Psi\wedge\d U_{i+1}\wedge\dots\wedge\d U_n\\
    &\quad +\Psi'\d U_1\wedge\dots\wedge\d U_n\\
    &\quad+ \d H(U,\Psi)\,,
  \end{split}
\]
where $\d H(U,\Psi)$ is the sum of all the exact forms that appeared in the preceding calculations. 

\medskip

We turn to the computation of $\partial_t(\j U-\j V)\cdot\nabla\Psi$. We have that
\[
  \begin{split}
  U_i'\d U_1\wedge& \dots\wedge \d U_{i-1}\wedge\d\Psi\wedge\d U_{i+1}\wedge\dots\wedge\d U_n\\
 &\quad -V_i'\d V_1\wedge \dots\wedge \d V_{i-1}\wedge\d\Psi\wedge\d V_{i+1}\wedge\dots\wedge\d V_n\\
 &=(U_i'-V_i')\d U_1\wedge \dots\wedge \d U_{i-1}\wedge\d\Psi\wedge\d U_{i+1}\wedge\dots\wedge\d U_n\\
 &\quad  +\sum_{j\neq i}  V_i'\d V_1\wedge \dots\wedge\d V_{j-1}\wedge(\d U_j-\d V_j)\wedge\\
 &\qquad\qquad \wedge\d U_{j+1}\wedge\dots\wedge \d U_{i-1}\wedge\d\Psi\wedge\d U_{i+1}\wedge\dots\wedge\d U_n
\end{split}
\]
and similarly
\[
  \begin{split}
 \Psi'\d U_1\wedge\dots\wedge\d U_n&-\Psi'\d V_1\wedge\dots\wedge\d V_n\\
 &=\sum_{i=1}^n\Psi'\d V_1\wedge\dots\wedge\d V_{i-1}\wedge(\d U_i-\d V_i)\wedge\d U_{i+1}\wedge\dots\wedge\d U_n\,.
\end{split}
\]
This proves the claim about the structure of $\partial_t(\j U-\j
V)\cdot\nabla\Psi \d x$ that we have made above, and we may summarize by saying
that it  can be written as a sum of exact forms plus terms that can be estimated by
\[
  C  |\tilde\nabla U-\tilde\nabla V| (|\tilde\nabla U|+|\tilde\nabla V|)^{n-1}
  |\tilde \nabla\Psi|\,.
\]
Inserting this in \eqref{eq:9},  we  obtain the following  three estimates by H\"older's inequality:
\[  
  \begin{split}
  \big| \big<&Ju-Jv,\psi\big>\big|\\
  &\lesssim \min \Big(\|\tilde\nabla\Psi\|_{L^\infty} \|\tilde\nabla U-\tilde\nabla V\|_{L^n}\left(\|\tilde\nabla U\|_{L^n}+\|\tilde\nabla V\|_{L^n}\right)^{n-1}\,,\\
  &\qquad\qquad \|\tilde\nabla\Psi\|_{L^{n+1}} \|\tilde\nabla U-\tilde\nabla V\|_{L^{n+1}}\left(\|\tilde\nabla U\|_{L^{n+1}}+\|\tilde\nabla V\|_{L^{n+1}}\right)^{n-1}\,,  \\
  &\qquad\qquad \int_0^1\|\tilde \nabla \Psi(\cdot,t)\|_{L^1}\|\|\tilde\nabla U(\cdot,t)-\tilde\nabla V(\cdot,t)\|_{L^\infty}\left(\|\tilde\nabla U(\cdot,t)\|_{L^\infty}+\|\tilde\nabla V(\cdot,t)\|_{L^\infty}\right)^{n-1}\d t\Big)
\end{split}
\]
Using  \eqref{eq:10}, \eqref{eq:11} this becomes
\[
  \begin{split}
    \left| \left<Ju-Jv,\psi\right>\right|
      &\lesssim \min \Big(\|\psi\|_{C^1} \|u-v\|_{W^{\frac{n-1}{n},n}}\left(\|u\|_{W^{\frac{n-1}{n},n}}+\|v\|_{W^{\frac{n-1}{n},n}}\right)^{n-1}\,,\\
  &\qquad\qquad \|\psi\|_{W^{\frac{n}{n+1},n+1}} \|u-v\|_{W^{\frac{n}{n+1},n+1}}\left(\|u\|_{W^{\frac{n}{n+1},n+1}}+\|v\|_{W^{\frac{n}{n+1},n+1}}\right)^{n-1}\,,  \\
&\qquad\qquad\int_0^1 t^{n(\alpha-1)}\d t \|\nabla \psi\|_{L^1}\|u-v\|_{C^{0,\alpha}}\left(\|u\|_{C^{0,\alpha}}+\|v\|_{C^{0,\alpha}}\right)^{n-1}\Big)  \,.
  \end{split}
\]
By the assumption that $n(\alpha-1)>-1$, the integral in the last line is finite, proving the lemma.
  \end{proof}

By the proposition, the following definition of $Ju$, which gives rigorous meaning to the statement of the theorems of the preceding section, is well-defined.
  
\begin{definition}
  \label{def:Ju}
    Let $u\in W^{\crit,n}(\Omega;\R^n)$. The distributional Jacobian $Ju:C_c^1(\Omega)\to \R$ is defined by
    \[
      \langle Ju,\psi\rangle \coloneqq \lim_{k\to\infty} \langle Ju_k,\psi\rangle
    \]
    where $u_k$ is any sequence in $C^1(\Omega;\R^n)$ that approximates $u$ in $W^{\crit,n}(\Omega;\R^n)$.
    For $u\in C^{0,\alpha}(\Omega;\R^n)$, $Ju$ may be extended to an element of the dual of $BV(\Omega)$ by
          \[
      \langle Ju,\psi\rangle \coloneqq \lim_{k\to\infty} \langle Ju_k,\psi_k\rangle\,,
    \]
  
  where $u_k$ is a sequence in $C^1(\Omega;\R^n)$ such that $u_k\to u$ in $C^{0,\alpha-\delta}$ for some $\delta>0$ with $\alpha-\delta>\crit$, and $\psi_k$ is a sequence in $C_c^1(\Omega)$ with $\psi_k\to \psi$ weakly-$\ast$ in $BV(\Omega)$, i.e.,
  \[
    \begin{split}
    \psi_k&\to \psi\text{ in }\in L^1(\Omega)\\
    \int_\Omega \varphi(x) |\nabla\psi_k|(x)\d x &\to\int_\Omega \varphi(x) \d|D\psi|(x)\text{ for any  }\varphi \in C_0(\Omega)\,,
  \end{split}
\]
where $C_0(\Omega)$ denotes the closure of $C_c^0(\Omega)$ with respect to the $C^0$-norm.
\end{definition}
  \section{Proof of the weak chain rule}
  \label{sec:proof-weak-chain}

\begin{figure}
\includegraphics{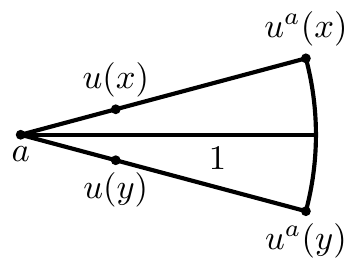}
\caption{For $a\in \R^n$, $u^a:\Omega \to \R^n$ is the stereographic projection of $u$ onto the unit sphere. If $u$ is smooth and $a$ is a regular value of $u$, then $Ju^a = \omega_n \sum_{x\in u^{-1}(a)} \sgn \det \nabla u(x) \delta _x$, see the appendix.}
\end{figure}

\begin{lemma}\label{lemma: u^a continuity}
Let $s\in (\crit,1]$, $sp\in (n-1,\infty]$, and $u_k \to u$ in  $W^{s,p}(\Omega; \R^n)$. Then the functions
\begin{equation*}
a \mapsto\|u_k^a - u^a\|_{W^{\frac{n-1}{n},n}(\Omega;\R^n)}^n
\end{equation*}
converge to $0$ in $L^1_\loc(\R^n)$.
\end{lemma}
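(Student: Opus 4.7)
My plan is to apply Fubini's theorem to exchange the $a$-integration over $B(0,R)$ with the Gagliardo double integral defining $[u_k^a-u^a]_{W^{(n-1)/n,n}}^n$, reducing everything to a pointwise estimate for the stereographic map $\Phi_a(v)\coloneqq (v-a)/|v-a|$ averaged over $a$. The core auxiliary bound I would first establish is the integrated Lipschitz inequality
\[
\int_{B(0,R)}|\Phi_a(p)-\Phi_a(q)|^n\,\d a \;\lesssim_R\; |p-q|^n\log\!\left(2+|p-q|^{-1}\right),
\]
valid for $p,q$ in any fixed ball of $\R^n$, which follows from the pointwise bound $|\Phi_a(p)-\Phi_a(q)|\lesssim \min\bigl(1,|p-q|/\max(|p-a|,|q-a|)\bigr)$ (itself a consequence of $|\nabla\Phi_a(v)|\lesssim 1/|v-a|$) by direct integration in polar coordinates centered at the midpoint of $[p,q]$.

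Next, I would bound the second-order increment $|(u_k^a-u^a)(x)-(u_k^a-u^a)(y)|^n$ in two complementary ways: by $2^{n-1}\bigl(|u^a(x)-u^a(y)|^n+|u_k^a(x)-u_k^a(y)|^n\bigr)$, which after applying the key estimate becomes a bound in terms of $|u(x)-u(y)|$ and $|u_k(x)-u_k(y)|$ times a logarithmic factor; or by $2^{n-1}\bigl(|u^a(x)-u_k^a(x)|^n+|u^a(y)-u_k^a(y)|^n\bigr)$, which becomes a bound in terms of $|u(x)-u_k(x)|$ and $|u(y)-u_k(y)|$ times a logarithmic factor. The first bound has no smallness as $k\to\infty$ but is small on the near diagonal; the second is globally small as $k\to\infty$ but carries the full singular weight $|x-y|^{-(2n-1)}$.

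I would then split the Gagliardo integral at a scale $\delta>0$. On the near-diagonal region $\{|x-y|\le\delta\}$ the first bound yields an integral controlled by a short-range, logarithmically-modified Gagliardo-type quantity of $u$ and $u_k$, which can be made small uniformly in $k$ as $\delta\to 0$ by exploiting that the assumptions $s>(n-1)/n$ and $sp>n-1$ give strictly more regularity than $W^{(n-1)/n,n}$. On the far region $\{|x-y|\ge\delta\}$ the weight $|x-y|^{-(2n-1)}$ is bounded by $\delta^{-(2n-1)}$, and the second bound reduces the contribution to a constant multiple of $\int_\Omega|u_k-u|^n\log(2+|u_k-u|^{-1})\,\d x$, which vanishes as $k\to\infty$ because fractional Sobolev embedding gives $u_k\to u$ in $L^q(\Omega)$ for some $q>n$ (since $p(1+s)>p+(n-1)\ge n$ forces the Sobolev conjugate exponent to exceed $n$, with the $sp\ge n$ and $sp>n$ cases handled by standard embeddings into $L^q$ for all finite $q$ and into $C^{0,\alpha}$ respectively).

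The main obstacle I anticipate is absorbing the logarithmic factor produced by averaging in $a$: the elementary bound $t^n\log(1/t)\lesssim_\eta t^{n-\eta}$ for $t$ bounded allows a reduction to a slightly sub-critical Gagliardo norm, but controlling the latter by $\|u\|_{W^{s,p}}$ uniformly in $k$ genuinely uses the strict inequalities $s>(n-1)/n$ and $sp>n-1$, requiring a fractional Sobolev embedding argument (together with H\"older regularity in the range $sp>n$) rather than a direct critical-case embedding, which just barely fails. Once both scales are controlled, sending first $k\to\infty$ and then $\delta\to 0$ yields the desired convergence.
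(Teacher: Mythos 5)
Your proposal follows the same overall strategy as the paper's proof of this lemma: exchange the $a$-integration with the Gagliardo double integral, exploit the pointwise bound $|u^a(x)-u^a(y)|\leq \min\bigl(2,\,|u(x)-u(y)|/\min(|u(x)-a|,|u(y)-a|)\bigr)$ to deduce the averaged estimate $\int_{B(0,R)}|u^a(x)-u^a(y)|^n\,\d a\lesssim_R |u(x)-u(y)|^n\log\bigl(2+|u(x)-u(y)|^{-1}\bigr)$, split the double integral at a scale $\delta$, and control the near-diagonal contribution uniformly in $k$ via the embedding of $W^{s,p}$ into the slightly perturbed spaces $W^{\frac{n-1}{n\mp\eps},n\mp\eps}$, which is exactly where the strict inequalities $s>\tfrac{n-1}{n}$, $sp>n-1$ enter. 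The one genuine difference lies in the treatment of the far-diagonal and $L^n$ parts: the paper passes to an a.e.-convergent subsequence of $u_k$ and applies dominated convergence twice (using only the trivial bound $|u_k^a-u^a|\leq 2$), whereas you argue quantitatively through the second-order bound $|(u_k^a-u^a)(x)-(u_k^a-u^a)(y)|\leq |u_k^a(x)-u^a(x)|+|u_k^a(y)-u^a(y)|$ and the same averaged estimate, which reduces the far-diagonal contribution to $\int_\Omega|u_k-u|^n\log(2+|u_k-u|^{-1})\,\d x\to 0$, a consequence of $L^q$-convergence on the bounded domain $\Omega$. Both variants are correct; yours avoids the subsequence extraction at the cost of a longer computation. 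One small imprecision worth fixing: you state the integrated Lipschitz bound as valid only ``for $p,q$ in a fixed ball,'' but $u$ need not be bounded when $sp\leq n$; you should verify---and it is true---that the estimate holds for all $p,q\in\R^n$ with a constant depending only on $R$, because when $|p|,|q|$ are large compared to $R$ one has $\min(|p-a|,|q-a|)\gtrsim\max(|p|,|q|)$ for $a\in B(0,R)$, so the left-hand side becomes correspondingly small.
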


\begin{remark}
Note that we require convergence in the strictly smaller space $W^{s,p}(\Omega;\R^n)$ to get convergence of almost every $u_k^a$ to $u^a$ in $W^{\crit,n}(\Omega;\R^n)$, at least for a subsequence.
\end{remark}

\begin{proof}
Pick any subsequence of $u_k$. Withouth relabeling, choose a subsequence such that $u_k \to u$ almost everywhere. Then,  noting that $|u^a(x)| = 1$, by the dominated convergence theorem,
\[
\int_{B(0,R)} \|u_k^a - u^a\|_{L^n(\Omega;\R^n)}^n\d x \to 0\, .
\]

For the nonlocal part of the  $W^{\crit,n}$-norm, we use the estimate
\begin{equation*}
|u^a(x) - u^a(y)| \leq \min \left(\frac{|u(x)-u(y)|}{\min(|u(x)-a|,|u(y)-a|)}, 2\right)
\end{equation*}
in the triple integral
\begin{equation*}
\begin{aligned}
&\int_{B(0,R)} [u^a_k - u^a]_{W^{\frac{n-1}{n},n}}^n \d a \\
= & \int_{B(0,R)} \int_\Omega \int_\Omega \frac{|u_k^a(x) - u_k^a(y) - u^a(x) + u^a(y)|^n}{|x-y|^{2n-1}} \d x \d y \d a\\
= & \underbrace{\int_{B(0,R)} \int\int_{|x-y|>\delta} \frac{|u_k^a(x) - u_k^a(y) - u^a(x) + u^a(y)|^n}{|x-y|^{2n-1}} \d x \d y \d a}_{I_k}\\
 & + \underbrace{\int_{B(0,R)} \int\int_{|x-y| \leq \delta} \frac{|u_k^a(x) - u_k^a(y) - u^a(x) + u^a(y)|^n}{|x-y|^{2n-1}} \d x \d y \d a}_{I\!I_k},
\end{aligned}
\end{equation*}
where $\delta > 0$ is independent of $k$. Again by dominated convergence, we have $I_k \to 0$. For $I\!I_k$, we write
\begin{equation*}
\begin{aligned}
I\!I_k \lesssim&  \int_{B(0,R)} \int\int_{|x-y| \leq \delta} \frac{|u_k^a(x) - u_k^a(y)|^n + |u^a(x) - u^a(y)|^n}{|x-y|^{2n-1}} \d x \d y \d a\\
\lesssim & \underbrace{ \int\int_{|x-y| \leq \delta}\frac{1}{|x-y|^{2n-1}} \int_{B(0,R)} \min \left(\frac{|u(x)-u(y)|^n}{\min(|u(x)-a|^n,|u(y)-a|^n)}, 2\right)  \d a \d x \d y }_{I\!I\!I}\\
& + \underbrace{ \int\int_{|x-y| \leq \delta}\frac{1}{|x-y|^{2n-1}} \int_{B(0,R)} \min \left(\frac{|u_k(x)-u_k(y)|^n}{\min(|u_k(x)-a|^n,|u_k(y)-a|^n)}, 2\right)  \d a \d x \d y }_{IV_k}
\end{aligned}
\end{equation*}

We note that $I\!I\!I$ is independent of $k$, and the interior integral can be estimated by
\begin{equation*}
  \begin{split}
  \int_{B(0,R)} \min &\left(\frac{|u(x)-u(y)|^n}{\min(|u(x)-a|^n,|u(y)-a|^n)}, 2\right)  \d a\\
  &\lesssim  |u(x)-u(y)|^n (|\log |u(x) -u(y)|| + \log R)\\
  &\leq C(\e)  (1+\log R)\max \left(|u(x)-u(y)|^{n-\e},|u(x)-u(y)|^{n+\e}\right)
\end{split}
\end{equation*}
for  $\e>0$ chosen small enough such that
\[
  \begin{split}
  \frac{n-1}{n-\e}&< s\\
  s-\frac{n}{p}&> -\frac{1}{n+\e}\,.
\end{split}
  \]
  This choice of $\e$ implies
  \[
    W^{s,p}(\Omega;\R^n)\subset W^{\frac{n-1}{n+\eps},n+\eps}\cap W^{\frac{n-1}{n-\eps},n-\eps} (\Omega;\R^n)
    \]
by standard embeddings for fractional Sobolev spaces (see e.g.~\cite{triebel2006theory}, Theorem 3.3.1).
Thus we get
\begin{equation*}
I\!I\!I \leq C(s,p,\Omega,R) \int\int_{|x-y| \leq \delta}\frac{\max\left(|u(x) - u(y)|^{n+\eps},|u(x)-u(y)|^{n-\e}\right)}{|x-y|^{2n-1}}  \d x \d y .
\end{equation*}
As $u \in W^{\frac{n-1}{n+\eps},n+\eps}\cap W^{\frac{n-1}{n-\eps},n-\eps} $, we have $I\!I\!I \to 0$ as $\delta \to 0$.

For $IV_k$,we arrive at the same estimate
\begin{equation*}
IV_k \leq C(s,p,\Omega,R) \int\int_{|x-y| \leq \delta}\frac{\max\left(|u_k(x) - u_k(y)|^{n+\eps},|u_k(x)-u_k(y)|^{n-\e}\right)}{|x-y|^{2n-1}}  \d x \d y .
\end{equation*}
Since $u_k \to u$ strongly in $W^{\frac{n-1}{n+\eps},n+\eps}\cap W^{\frac{n-1}{n-\eps},n-\eps}$, the integrand is compact in $L^1$ and thus uniformly integrable, and we have $\sup_k IV_k \to 0$ as $\delta \to 0$.
\end{proof}

\begin{definition}
  For a distribution $T\in\mathcal D'(\Omega)$, we define
  \[
    \|T\|_{W^{-1,1}(\Omega)} = \sup\{\langle T, \psi \rangle\,:\,\psi\in C_c^\infty(\Omega), \Lip \psi \leq 1\}\,,
  \]
where $\Lip\psi$ denotes the Lipschitz constant of $\psi$,   and we set $W^{-1,1}(\Omega)=\{T\in\mathcal D'(\Omega):\|T\|_{W^{-1,1}(\Omega)}<\infty\}$.
\end{definition}

In the following lemma, we will consider the extension of a function $u\in W^{\crit,n}(\Omega;\R^n)$ to $\Omega\times[0,1)$ that we already used in the proof of Proposition \ref{prop:distrib}: Let $\tilde u\in W^{\crit,n}(\R^n;\R^n)$ be an extension of $u$ to $\R^n$ such that
\[
  \|\tilde u\|_{W^{\frac{n-1}{n},n}(\R^n;\R^n)}\lesssim \|u\|_{W^{\frac{n-1}{n},n}(\Omega;\R^n)}
\]
and define $U\in W^{1,n}(\Omega\times [0,1);\R^n)$ by
\[
  U(x,t)=\eta_t* \tilde u(x)\,.
  \]
The following Lemma then shows a certain continuity of the $Ju^a$ under mollification. A similar statement can be found in \cite[Theorem 1.1]{hang2000remark}.

\begin{lemma}\label{lemma: Ju_k^a limit}
Let $\Omega \subseteq \R^n$ be open, with Lipschitz boundary. Let $u\in W^{\crit,n}(\Omega;\R^n)$, and $U$ the extension to $\Omega\times[0,1)$ defined above.  Let $t_k \downarrow 0$ and take $u_k = U(\cdot,t_k) \in {W^{\crit,n} \cap C^\infty(\Omega;\R^n)}$.
Then there is a family of distributions $(T^a )_{a\in \R^n} \subset W^{-1,1}(\Omega)$ such that
\begin{equation*}
\int_{\R^n} \|J u_k^a - T^a \|_{W^{-1,1}(\Omega)} \d a \to 0.
\end{equation*}
\end{lemma}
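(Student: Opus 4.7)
The plan is to show that $(a \mapsto Ju_k^a)_k$ is a Cauchy sequence in the Bochner space $L^1(\R^n; W^{-1,1}(\Omega))$ and then define $T^a$ as its limit. Since each $u_k$ is smooth and $\Omega$ is bounded, $u_k(\Omega)$ is bounded and hence $Ju_k^a = 0$ for $|a|$ exceeding $\|u_k\|_{L^\infty}$, so indeed $a \mapsto Ju_k^a$ belongs to $L^1(\R^n; W^{-1,1}(\Omega))$ for each $k$. By completeness of the Bochner space, once the Cauchy property is established, $T^a$ is automatically defined with the claimed convergence.

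The key pointwise input is the first estimate of Proposition \ref{prop:distrib} applied to the pair $(u_k^a, u_l^a)$, which gives
\[
\|Ju_k^a - Ju_l^a\|_{W^{-1,1}(\Omega)} \lesssim \|u_k^a - u_l^a\|_{W^{\crit, n}}\bigl(\|u_k^a\|_{W^{\crit, n}} + \|u_l^a\|_{W^{\crit, n}}\bigr)^{n-1}.
\]
I would split the $a$-integration as $\R^n = B(0, R) \sqcup (\R^n \setminus B(0, R))$ for large $R$. On the ball, H\"older's inequality in $a$ with exponents $n$ and $n/(n-1)$ reduces the problem to bounding
\[
\int_{B(0, R)} \|u_k^a - u_l^a\|_{W^{\crit, n}}^n\, da \qquad \text{and} \qquad \sup_k \int_{B(0, R)} \|u_k^a\|_{W^{\crit, n}}^n\, da.
\]
Both are handled as in the proof of Lemma \ref{lemma: u^a continuity}: the pointwise bound
$|u^a(x) - u^a(y)| \leq \min\bigl(|u(x)-u(y)|/\min(|u(x)-a|, |u(y)-a|), 2\bigr)$
together with Fubini brings them to an integrand with a logarithmic factor $|\log|u(x)-u(y)||$ against the kernel $|x-y|^{-(2n-1)}$. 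For the mollifications $u_k = U(\cdot,t_k)$ specifically, I would close the argument by using the identity $u_k - u_l = \int_{t_l}^{t_k} \partial_t U(\cdot,t)\,dt$ together with the trace/extension inequality $\|U\|_{W^{1,n}(\Omega\times[0,1))} \lesssim \|u\|_{W^{\crit,n}}$, recasting the offending integral as a $W^{1,n}$-quantity of $U$ on the thin slab $\Omega \times (t_l, t_k)$; this vanishes as $t_l, t_k \to 0$ by absolute continuity of the Lebesgue integral.

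For the tail $|a| > R$, I would apply Proposition \ref{prop:distrib} once more, now with the constant function $v \equiv -a/|a|$ (whose Jacobian vanishes), to obtain
\[
\|Ju_k^a\|_{W^{-1,1}} \lesssim \|u_k^a + a/|a|\|_{W^{\crit, n}}\bigl(\|u_k^a\|_{W^{\crit, n}} + 1\bigr)^{n-1}.
\]
The elementary estimate $|u^a(x) + a/|a|| \lesssim |u(x)|/|a|$ valid for large $|a|$, together with an analogous bound on the seminorm, provides enough decay in $|a|$ so that $\int_{|a|>R}\|Ju_k^a - Ju_l^a\|_{W^{-1,1}}\,da$ is arbitrarily small uniformly in $k, l$, provided $R$ is large.

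The principal obstacle is the borderline logarithmic loss in the interior integral: at the critical regularity $W^{\crit,n}$, the strategy that worked in Lemma \ref{lemma: u^a continuity} (using the embedding $W^{s,p} \subset W^{\frac{n-1}{n\pm\eps},n\pm\eps}$ valid for $sp > n-1$) breaks down, and one must instead invoke the extension $U$ directly. It is exactly the $W^{1,n}$-control on $U$, rather than any abstract property of the Cauchy sequence $(u_k)$ in $W^{\crit,n}$, that supplies the missing summability.
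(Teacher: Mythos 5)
Your overall frame — show $(a\mapsto Ju_k^a)_k$ is Cauchy in $L^1(\R^n;W^{-1,1}(\Omega))$ and define $T^a$ as the limit — matches the paper, but the route you take to the Cauchy estimate has a gap that you half-acknowledge and do not repair. You propose to apply the first bound of Proposition~\ref{prop:distrib} to the pair $(u_k^a,u_l^a)$ and then integrate in $a$, recycling the machinery of Lemma~\ref{lemma: u^a continuity}. As you yourself observe, that machinery breaks exactly at the critical space $W^{\crit,n}$: the pointwise bound on $|u^a(x)-u^a(y)|$ produces, after the $a$-integration, the factor $|u(x)-u(y)|^n\bigl(|\log|u(x)-u(y)||+C\bigr)$, which is absorbed in Lemma~\ref{lemma: u^a continuity} only because $u$ lives in $W^{\frac{n-1}{n\pm\e},n\pm\e}$ for some $\e>0$, and that embedding requires $sp>n-1$. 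At $s=\crit$, $p=n$ there is no $\e$ to spare. Your proposed fix — writing $u_k-u_l=\int_{t_l}^{t_k}\partial_t U\,dt$ and ``recasting the offending integral as a $W^{1,n}$-quantity of $U$ on the thin slab'' — is not a concrete estimate: the quantity after Fubini still carries the logarithmic weight against the kernel $|x-y|^{-(2n-1)}$, and nothing in $\|U\|_{W^{1,n}(\Omega\times(t_l,t_k))}$ controls that without a strictly supercritical exponent. So the Cauchy estimate is not established, and the difficulty is genuine, not a matter of detail.

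The paper sidesteps the analytic route entirely with a geometric argument, and that is the key idea you are missing. For a regular value $a$ of the smooth maps $u_k$, $Ju_k^a$ is a finite sum $\omega_n\sum_{x\in u_k^{-1}(a)}\sigma(x)\delta_x$ of signed Dirac masses. The $W^{-1,1}$-norm (essentially a Monge--Kantorovich/flat norm) of $Ju_k^a-Ju_l^a$ is then controlled by a matching: following the level-set curves of the smooth extension $U$ in the slab $\Omega\times(t_k,t_l)$, each Dirac mass at time $t_k$ is connected either to one at time $t_l$ or to a nearby one at time $t_k$ of opposite sign or to $\partial\Omega$, and the cost of transporting/annihilating them is at most $\H^1\bigl(U^{-1}(a)\cap(\Omega\times(t_k,t_l))\bigr)$. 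Integrating in $a$ and invoking the coarea formula for $U$ gives $\int_{\R^n}\|Ju_k^a-Ju_l^a\|_{W^{-1,1}}\,\d a\leq\omega_n\int_{\Omega\times(t_k,t_l)}|JU|$, which vanishes as $t_k,t_l\downarrow 0$ by absolute continuity, since $|JU|\leq|\tilde\nabla U|^n\in L^1$. This coarea-plus-degree-counting argument is what lets the lemma hold at the critical exponent $W^{\crit,n}$; if you want to rescue your approach, you need to replace the Proposition~\ref{prop:distrib}-plus-Lemma~\ref{lemma: u^a continuity} pointwise estimate with this matching argument, because no purely norm-based estimate on $u_k^a-u_l^a$ closes at criticality.
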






\begin{remark}
\label{rem:Tconv}
Combining Lemma  \ref{lemma: u^a continuity} and \ref{lemma: Ju_k^a limit}, we
obtain that $T^a = J u^a$ for almost every $a$ if $u\in W^{s,p}(\Omega;\R^n)$, $sp>n-1$, $s>\crit$.
\end{remark}

\begin{proof}[Proof of Lemma \ref{lemma: Ju_k^a limit}]

By the coarea formula for $U$ (see \cite[Theorem 2.93]{AFP}) we have
\begin{equation}\label{eq: coarea}
\int_{\R^n} \H^1(U^{-1}(a))\d a = \int_{\Omega \times [0,1)} |JU|\d x \leq  \|U\|_{W^{1,n}(\Omega\times [0,1);\R^n)}^n.
\end{equation}

In addition, because $U$ is smooth, almost all $a\in \R^n$ are regular values of $U$, so that $U^{-1}(a)\cap (\Omega \times (0,1))$ is a $1$-manifold, i.e. a countable union of curves that are either closed or terminate on either $\Omega \times \{0\}$, on $\Omega \times \{1\}$, or on $\partial \Omega \times [0,1]$.

Now $u_k = U(\cdot,t_k) \in W^{\crit,n}\cap C^\infty(\Omega;\R^n)$. We note that almost every $a\in \R^n$ is a regular value of both $U$ and of all $u_k$.

For such an $a$, $J u_k^a = \omega_n \sum_{x\in u_k^{-1}(a)} \sigma(x) \delta_x$, with $\sigma(x) = \pm 1$, see the appendix. We can then estimate for $t_k \leq t_l$
\begin{equation}\label{eq: Jacobians Cauchy}
\|J u_k^a - J u_l^a\|_{W^{-1,1}(\Omega)} \leq \omega_n \H^1\left(U^{-1}(a) \cap (\Omega \times (t_k, t_l))\right).
\end{equation}

To see this, consider a point $x\in \supp Ju_k^a$. Then there is a unique curve $c\subseteq U^{-1}(a)$ passing through $(x,t_k)$ transversally. We follow this curve into the set $\Omega \times (t_k, t_l)$ until we intersect either
\begin{itemize}
\item[(i)] $\Omega \times \{t_k\}$
  \item[(ii)] or $\Omega \times \{t_l\}$
 \item[(iii)] or $\partial \Omega \times (0,1)$,
\end{itemize}
whichever comes first. See Figure \ref{fig: level sets} for a graphical analogy. In case (i), call the intersection point $(y,t_k)$. Then $y\in \supp Ju_k^a$ and $\sigma(y) = -\sigma(x)$. In case (ii), call the point $(y,t_l)$. Then $y\in \supp Ju_l^a$ and $\sigma(y) = \sigma(x)$. In case (iii), the intersection point is $(y,t)$, with $y\in \partial \Omega$ and $t\in [t_k,t_l]$ (a unique intersection point exists because $c$ has finite length).
For a point $x \in \supp Ju_l^a$ we can also find a corresponding second point $(y,t_k)$ or $(y,t_l)$ or $(y,t)$ as above.

All the points that are covered by the first two cases occur in pairs $(x_i,y_i)$ such that their contributions appear with opposite signs when computing
$Ju_k^a - Ju_l^a$. We collect the corresponding indices $i$ in a set $I'$, and 
the set $I''$ serves as index set of the points occuring in case (iii) above.
Thus we have for some $\sigma_i,\sigma_j \in \{\pm 1\}$
\begin{equation*}
Ju_k^a - Ju_l^a = \omega_n \left( \sum_{i\in I'} \sigma_i(\delta(x_i) - \delta(y_i)) + \sum_{j\in I''} \sigma_j \delta(x_j) \right), 
\end{equation*}
with $\sum_{i \in I'} |x_i - y_i| + \sum_{j\in I''} \dist(x_j, \partial \Omega) \leq \H^1\left(U^{-1}(a) \cap (\Omega \times (t_k, t_l))\right)$. If $\psi\in C_c^1(\R^n)$ is a test function with $\Lip \psi \leq 1$, we have
\begin{equation*}
  \begin{split}
    |\langle Ju_k^a - Ju_l^a, \psi \rangle|& \leq \omega_n \left( \sum_{i\in I'} |\psi(x_i) - \psi(y_i)| + \sum_{j\in I''} |\psi(x_j)| \right)\\
    &\leq \omega_n\H^1\left(U^{-1}(a) \cap (\Omega \times (t_k, t_l))\right),
\end{split}
\end{equation*}
which implies \eqref{eq: Jacobians Cauchy}.

By \eqref{eq: coarea} and \eqref{eq: Jacobians Cauchy}, the maps $a \mapsto J u_k^a$ form a Cauchy sequence in $L^1(\R^n;W^{-1,1}(\Omega))$. The statement follows from the completeness of that space.

\end{proof}

\begin{figure}
\includegraphics{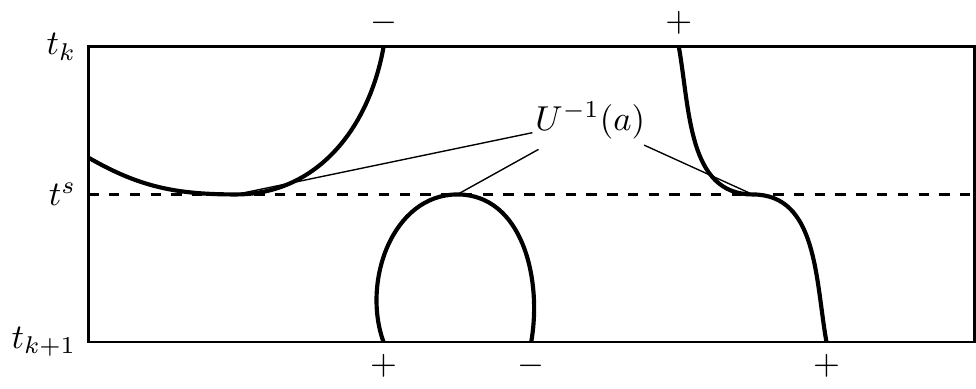}
\caption{The level set $U^{-1}(a)$ for a regular value $a\in \R^n$. Note that $a$ is a singular value of $u_{t^s}$ only for a null set of $t^s\in (0,1)$.}\label{fig: level sets}
\end{figure}

\begin{proof}[Proof of Theorem \ref{theorem: weak coarea chain}]
The weak coarea formula holds in $C^\infty(\Omega;\R^n)$ (see the appendix).
For $u\in W^{s,p}(\Omega;\R^n)$ consider the  extension $U\in W^{1,n}\cap C^\infty(\Omega \times [0,1);\R^n)$ defined above Lemma \ref{lemma: Ju_k^a limit} and set $u_k = U(\cdot,t_k)$ for some sequence $t_k \downarrow 0$.


Now $u_k \to u$ in $W^{s,p}(\Omega;\R^n)$, and by Lemma \ref{lemma: u^a continuity} the maps $a\mapsto u_k^a$ converge to $a\mapsto u^a$ in $L^1_{\mathrm{loc}}(\R^n;W^{\crit,n}(\Omega;\R^n))$. Thus $u_k^a \to u^a$ in $W^{\crit,n}(\Omega;\R^n)$ for almost every $a\in \R^n$ after extracting a subsequence. Fix a test function $\psi\in C_c^\infty(\R^n)$. Then
\begin{equation*}
\langle Ju, \psi \rangle = \lim_{k\to \infty} \langle Ju_k, \psi \rangle = \lim_{k\to \infty} \frac{1}{\omega_n} \int_{\R^n} \langle Ju_k^a, \psi \rangle \d a.
\end{equation*}

By Proposition  \ref{prop:distrib} and Lemma \ref{lemma: u^a continuity}, we
have $\langle J u_k^a,\psi\rangle \to \langle J u^a,\psi\rangle$ for almost
every $a\in \R^n$. On the other hand, by Lemma
 \ref{lemma: Ju_k^a limit}, $\int_{\R^n} \|J u_k^a - T^a\|_{W^{-1,1}}\d a \to 0$ for some family of distributions $(T^a)_{a\in \R^n}$. As we have already stated in Remark \ref{rem:Tconv}, it follows that $T^a = Ju^a$ for almost every $a$, and in particular
\begin{equation*}
\lim_{k\to \infty} \frac{1}{\omega_n} \int_{\R^n} \langle Ju_k^a, \psi \rangle \d a = \frac{1}{\omega_n} \int_{\R^n} \langle Ju^a, \psi \rangle \d a.
\end{equation*}

We show the weak chain rule in the same way. It clearly holds for all $u_k$. Since $F\in C^1(\R^n;\R^n)$ is globally Lipschitz, $F \circ u_k \to F \circ u$ in $W^{s,p}(\Omega;\R^n)$, so that
\begin{equation*}
\langle J(F\circ u), \psi \rangle = \lim_{k\to \infty} \langle J(F\circ u_k), \psi \rangle = \lim_{k\to \infty} \frac{1}{\omega_n} \int_{\R^n} \det \nabla F(a) \langle J u_k^a, \psi \rangle \d a.
\end{equation*}

Once again,
\begin{equation*}
\lim_{k\to \infty} \int_{\R^n} \|Ju_k^a -Ju^a\|_{W^{-1,1}(\Omega)} \d a \to 0,
\end{equation*}

and since $\det \nabla F \in L^\infty(\R^n)$, we have
\begin{equation*}
\lim_{k\to \infty} \frac{1}{\omega_n} \int_{\R^n} \det \nabla F(a) \langle Ju_k^a, \psi \rangle \d a =  \frac{1}{\omega_n} \int_{\R^n} \det \nabla F(a) \langle Ju^a, \psi \rangle \d a.
\end{equation*}

\end{proof}

\section{Proof of the strong chain rule}
\label{sec:proof-strong-chain}
\newcommand{\W}{{W^{\frac{n}{n+1},n+1}(\Omega;\R^n)}}
\newcommand{\Wo}{{W^{\frac{n}{n+1},n+1}(\Omega;\R^n)}}

As a consequence of Proposition \ref{prop:distrib}, if $u\in \Wo$, we can extend $Ju$ to a bounded linear functional on $\Wo$:
\begin{corollary}
  For $u\in \W$, we have that $Ju\in \left(\Wo\right)^*$.
\end{corollary}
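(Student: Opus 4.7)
The plan is to read off the corollary directly from the second (middle) line in the estimate of Proposition~\ref{prop:distrib}. For smooth $u,v\in C^1(\Omega;\R^n)$ and every $\psi\in C_c^1(\Omega)$, that line gives
\[
\big|\langle Ju-Jv,\psi\rangle\big|\lesssim \|u-v\|_{W^{\frac{n}{n+1},n+1}}\bigl(\|u\|_{W^{\frac{n}{n+1},n+1}}+\|v\|_{W^{\frac{n}{n+1},n+1}}\bigr)^{n-1}\|\psi\|_{W^{\frac{n}{n+1},n+1}}.
\]
Specializing to $v\equiv 0\in C^1(\Omega;\R^n)$ (which is admissible in the proposition) yields the key a~priori bound
\[
\big|\langle Ju,\psi\rangle\big|\lesssim \|u\|_{W^{\frac{n}{n+1},n+1}}^{\,n}\,\|\psi\|_{W^{\frac{n}{n+1},n+1}}
\]
for all smooth $u$ and all $\psi\in C_c^1(\Omega)$.

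To pass from smooth $u$ to an arbitrary $u\in W^{\frac{n}{n+1},n+1}(\Omega;\R^n)$, I would choose a sequence $u_k\in C^1(\Omega;\R^n)$ with $u_k\to u$ in $W^{\frac{n}{n+1},n+1}$, which is possible since $\Omega$ has smooth boundary. The differential exponents satisfy $\frac{n}{n+1}-\frac{n}{n+1}=0>-\frac{1}{n}=\frac{n-1}{n}-1$ and $\frac{n}{n+1}\ge \frac{n-1}{n}$, so the standard fractional Sobolev embedding gives $W^{\frac{n}{n+1},n+1}(\Omega;\R^n)\hookrightarrow W^{\frac{n-1}{n},n}(\Omega;\R^n)$ on the bounded domain~$\Omega$. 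In particular $u_k\to u$ in $W^{\frac{n-1}{n},n}$, so the sequence $\langle Ju_k,\psi\rangle$ converges to the value $\langle Ju,\psi\rangle$ prescribed by Definition~\ref{def:Ju}. Applying the estimate above to $u_k$ and sending $k\to\infty$ transfers the bound to $u$,
\[
\big|\langle Ju,\psi\rangle\big|\lesssim \|u\|_{W^{\frac{n}{n+1},n+1}}^{\,n}\,\|\psi\|_{W^{\frac{n}{n+1},n+1}} \qquad\forall\,\psi\in C_c^1(\Omega).
\]

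Since $C_c^1(\Omega)$ is dense in the closure of $C_c^1(\Omega)$ inside $W^{\frac{n}{n+1},n+1}(\Omega)$, the functional $\psi\mapsto \langle Ju,\psi\rangle$ admits a unique continuous extension to that closure, and therefore (e.g.\ by Hahn--Banach, if one wants a functional on the full space) to an element of $\bigl(W^{\frac{n}{n+1},n+1}(\Omega;\R^n)\bigr)^*$. There is no real obstacle here: the entire analytical content of the corollary is packed into the second line of Proposition~\ref{prop:distrib}, and the rest is bookkeeping. The one subtle point that deserves explicit mention is the compatibility of the two approximation schemes, namely that the distribution obtained as the limit in the weaker norm $W^{\frac{n-1}{n},n}$ (Definition~\ref{def:Ju}) coincides with the limit produced by the $W^{\frac{n}{n+1},n+1}$-approximation used above; this is exactly what the embedding noted in the previous paragraph secures.
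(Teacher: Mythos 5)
Your proof is correct and follows exactly the route the paper intends: the corollary is stated as an immediate consequence of the middle estimate in Proposition~\ref{prop:distrib}, and your argument (take $v=0$, approximate $u$ by smooth functions in $W^{\frac{n}{n+1},n+1}$, use the embedding $W^{\frac{n}{n+1},n+1}(\Omega)\hookrightarrow W^{\frac{n-1}{n},n}(\Omega)$ to reconcile with Definition~\ref{def:Ju}, then pass to the limit and extend by density plus Hahn--Banach) is precisely the bookkeeping the paper leaves implicit. The compatibility check between the two approximation schemes, which you single out, is indeed the one non-trivial point and you handle it correctly via the differential-dimension inequality $0>-\frac{1}{n}$.
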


From the corollary, we immediately obtain the proof of Theorem 
\ref{thm:strong}:

\begin{proof}[Proof of Theorem \ref{thm:strong}]
For a smooth approximation
$u_k\to u$ in $W^{\frac{n}{n+1},n+1}(\Omega;\R^n)$, we have
\[
  \det DF\circ u_k\to \det DF\circ u\quad\text{  in }W^{\frac{n}{n+1},n+1}(\Omega)
\]
as well as $F\circ u_k\to F\circ u$ in $W^{\frac{n}{n+1},n+1}(\Omega;\R^n)$. (For our present purpose, it would be enough to have the latter convergence in $W^{\crit,n}(\Omega;\R^n)$.) Thus, for every $\psi\in C^1_c(\Omega)$,  we have that 
\begin{equation}
  \begin{split}
  \left<J(F\circ u),\psi\right>&=\lim_{k\to \infty}\left<J(F\circ u_k),\psi\right>\\
  &=\lim_{k\to \infty}\left<Ju_k,\psi (\det DF)\circ u_k\right>\\
&=    \left<Ju,\psi (\det DF)\circ u\right> \,.\label{eq:7}
\end{split}
\end{equation}
In other words, we have $J(F\circ u)=Ju(\det DF)\circ u$ as Radon measures.
This is just the 
claim (ii) of Theorem \ref{thm:strong}.

\medskip

By Theorem 13 in \cite{de2003some}, it follows that the strong coarea formula
holds too. One needs to note that  the assumptions of that Theorem include
\[
  u\in L^\infty\cap W^{1,n-1} \quad \text{ and } \quad Ju \text{ is a Radon
    measure,}
  \]
but in the proof only the latter condition is used; it goes through for $u\in
C^0\cap W^{\frac{n}{n+1},n+1}(\Omega;\R^n)$. Also,
the requirement that  \eqref{eq:7} holds for
every $F\in C^1_c(\R^n;\R^n)$ (as required in the statement of the quoted
theorem) can be replaced by its validity for every $F\in
C^\infty_c(\R^n;\R^n)$, since the proof in \cite{de2003some} works by approximation,
and that is possible even with the weaker requirement.
\end{proof}

\section{Chain rules for H\"older functions}
\label{sec:chain-rules-holder}
\subsection{Proof of the chain rule for H\"older functions}
\label{sec:proof-chain-rule-hoelder}
Let $\Omega\subset \R^n$ be as before, and $\alpha\in (\crit,1)$. The H\"older space
$C^{0,\alpha}(\Omega;\R^n)$ is  a subset of $W^{\crit,n}(\Omega;\R^n)$ (see
\cite{triebel2006theory}), and thus the weak chain rule holds. Slightly more than
that can
be said in this case:
We have already noted in Definition \ref{def:Ju} that for $u\in C^{0,\alpha}$,
$Ju$ is an element of the dual of $BV$.
Our aim is now to extend the weak chain rule to the case $\psi\in
BV(\Omega)$. Since $u^a$ is not a H\"older function, we cannot
immediately make sense of the right hand side in that equation. However if we
fix 
the test function to be the characteristic function of a Lipschitz set 
$E$, we have the following:
\begin{lemma}
 \label{lem:estim_bdry_holder} 
Let $u,v\in C^{0,\alpha}(\Omega;\R^n)$, and $a\in\R^n$. Then $\mathbf j u^a\in C^0(\Omega\setminus
u^{-1}(a);\R^n)$ and $\mathbf j v^a\in C^0(\Omega\setminus
v^{-1}(a);\R^n)$, and for every Lipschitz set $E$ with
   $\overline E\subset\Omega$ and $a\not\in u(\partial E)\cup v(\partial E)$, we
   have that
  \[
    \left<J u^a-J v^a,\chi_E\right>\leq
    C\left(\|u\|_{C^{0,\alpha}}+\|v\|_{C^{0,\alpha}}\right)^{n-1}\|u-v\|_{C^{0,\alpha}}\mathrm{Per}
    (E)\,,
  \]
  where $\mathrm{Per}(E)=|D\chi_E|(\Omega)$ denotes the perimeter of $E$ and the constant $C$ depends only on $\dist(a,u(\partial E))$,
  $\dist(a,v(\partial E))$.
\end{lemma}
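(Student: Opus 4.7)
The plan is to reduce to Proposition~\ref{prop:distrib} by composing $u,v$ with a smoothly truncated version of the spherical projection and then approximating $\chi_E$ by smooth test functions. Set $\delta = \tfrac12 \min(\dist(a,u(\partial E)),\dist(a,v(\partial E)))$, which is positive by compactness of $\partial E$ and the hypothesis $a\notin u(\partial E)\cup v(\partial E)$. Fix $G \in C^\infty(\R^n;\R^n)$ that agrees with $F_a(y)=(y-a)/|y-a|$ on $\{|y-a|\geq \delta\}$, is bounded, and has bounded $C^2$-norm (constants depending only on $\delta$). Set $\tilde u = G\circ u$ and $\tilde v = G\circ v$; these lie in $C^{0,\alpha}(\Omega;\R^n)$, and a standard mean-value argument (using $G\in C^{1,1}$) gives
\[
\|\tilde u\|_{C^{0,\alpha}} + \|\tilde v\|_{C^{0,\alpha}} \leq C(\delta)\bigl(\|u\|_{C^{0,\alpha}} + \|v\|_{C^{0,\alpha}}\bigr),\qquad \|\tilde u - \tilde v\|_{C^{0,\alpha}} \leq C(\delta)\|u-v\|_{C^{0,\alpha}}.
\]
Crucially, on the open neighbourhood $N = \{|u-a|>\delta\}\cap\{|v-a|>\delta\}$ of $\partial E$ one has $\tilde u = u^a$ and $\tilde v = v^a$.

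Applying the third estimate of Proposition~\ref{prop:distrib} to $\tilde u,\tilde v$ gives
\[
|\langle J\tilde u - J\tilde v,\psi\rangle| \lesssim \|\tilde u - \tilde v\|_{C^{0,\alpha}} \bigl(\|\tilde u\|_{C^{0,\alpha}} + \|\tilde v\|_{C^{0,\alpha}}\bigr)^{n-1} \|\nabla \psi\|_{L^1}
\]
for every $\psi\in C^1_c(\Omega)$. Approximating $\chi_E$ by a standard sequence $\psi_k\in C^1_c(\Omega)$ with $\psi_k\to\chi_E$ in $L^1$ and $\|\nabla \psi_k\|_{L^1}\to\mathrm{Per}(E)$, Definition~\ref{def:Ju} gives $\langle J\tilde u,\psi_k\rangle\to\langle J\tilde u,\chi_E\rangle$ and likewise for $\tilde v$, so passing to the limit yields
\[
|\langle J\tilde u - J\tilde v,\chi_E\rangle| \leq C(\delta) \|u-v\|_{C^{0,\alpha}} \bigl(\|u\|_{C^{0,\alpha}} + \|v\|_{C^{0,\alpha}}\bigr)^{n-1} \mathrm{Per}(E).
\]

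The main obstacle is the identification $\langle J\tilde u - J\tilde v, \chi_E\rangle = \langle Ju^a - Jv^a,\chi_E\rangle$, together with the continuity assertion $\j u^a\in C^0(\Omega\setminus u^{-1}(a);\R^n)$. Heuristically, both pairings should, after integration by parts, reduce to the boundary integral $-\int_{\partial^*E}\j u^a\cdot\nu_E\,\d\H^{n-1}$ on the reduced boundary, which lies entirely in $N$ where $\tilde u$ coincides with $u^a$. To make this rigorous I would define $\j u^a$ on any compact $K\subset \Omega\setminus u^{-1}(a)$ as the uniform limit of the smooth fields $\j u_t^a$, where $u_t = \eta_t *\widetilde u$ is the mollification from the proof of Proposition~\ref{prop:distrib}: on $K$, $|u_t-a|$ is bounded below for $t$ small, and testing the differential-form identities underlying Proposition~\ref{prop:distrib} against bump functions approximating a Dirac mass at $x\in K$ yields a Cauchy estimate on $\j u_t^a(x)$ as $t\downarrow 0$ with constants depending on $\dist(x,u^{-1}(a))$ and $\|u\|_{C^{0,\alpha}}$. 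One then checks that this intrinsic definition agrees with $\j\tilde u$ on $N$ (and is independent of the choice of $G$ and of the extension), and concludes by choosing $\psi_k$ with $\mathrm{supp}(\nabla\psi_k)\subset N$ so that $-\int_N \j\tilde u\cdot\nabla\psi_k\to -\int_{\partial^*E}\j u^a\cdot\nu_E\,\d\H^{n-1}$, which we take as the definition of $\langle Ju^a,\chi_E\rangle$. Proving the uniform convergence of the $\j u_t^a$ on compact subsets of $\Omega\setminus u^{-1}(a)$ is the most technical step and where I expect the real work to lie.
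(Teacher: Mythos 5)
You replace the paper's device---restricting to the subdomain $\Omega_r = \Omega\setminus\bigl(u^{-1}(B(a,r))\cup v^{-1}(B(a,r))\bigr)$, on which $u^a,v^a$ become H\"older with norm $\lesssim r^{-1}\|u\|_{C^{0,\alpha}}$---with a smooth bounded truncation $G$ of the radial projection applied in the \emph{target}, producing globally H\"older maps $\tilde u,\tilde v$ on all of $\Omega$. This is a genuine alternative, and it has one attractive feature: Proposition~\ref{prop:distrib} is stated for a Lipschitz domain, which holds for $\Omega$ but is not obvious for $\Omega_r$ (the preimage $u^{-1}(B(a,r))$ can be very irregular), so the paper's one-line ``apply the corollary on $\Omega_r$'' is quietly relying on the fact that the pairing with $\chi_E$ only sees a neighbourhood of $\partial E$. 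Your construction avoids that issue by never leaving $\Omega$; the H\"older bounds on $\tilde u,\tilde v$ via $G\in C^{1,1}$ are correct.

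The price is that you must now identify $\langle J\tilde u - J\tilde v,\chi_E\rangle$ with $\langle Ju^a - Jv^a,\chi_E\rangle$, a step that simply does not arise in the paper's formulation, which works with $u^a$ on $\Omega_r$ directly and interprets $\langle Ju^a,\chi_E\rangle$ via Definition~\ref{def:Ju}. You honestly flag this identification, together with the continuity of $\j u^a$ off $u^{-1}(a)$, as ``where I expect the real work to lie,'' and the route you sketch for it does not go through as stated: Proposition~\ref{prop:distrib} controls $\langle Ju_t^a - Ju_s^a,\psi\rangle$ by $\|\nabla\psi\|_{L^1}$, so testing against bumps concentrating at a point degenerates and cannot yield a pointwise Cauchy estimate on $\j u_t^a(x)$; and in any case it controls only the divergence $Ju_t^a - Ju_s^a$, not the vector field $\j u_t^a - \j u_s^a$. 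Note also that $\eta_t\ast\widetilde{G\circ u}$ and $\bigl(\eta_t\ast\tilde u - a\bigr)/\bigl|\eta_t\ast\tilde u - a\bigr|$ are different smooth approximants even on $N$, so ``$\tilde u = u^a$ on $N$'' does not immediately transfer to the regularized objects. The shorter route, which is the paper's, is to pick a smooth intermediate domain $\Omega'$ with $\partial E\subset\Omega'\Subset\Omega_r$, apply Proposition~\ref{prop:distrib} and Definition~\ref{def:Ju} on $\Omega'$, and observe that this directly defines and estimates $\langle Ju^a - Jv^a,\chi_E\rangle$ with no surrogate function and no identification step.
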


\begin{proof}
  Choose $r>0$ such that $\min(\dist(a,u(\partial E)),\dist(a,v(\partial E)))>2r$,
  set
  \[
    \Omega_r \coloneqq \Omega\setminus\left(u^{-1}(B(a,r))\cup v^{-1}(B(a,r))\right),
    \]
  and apply the  previous corollary to the function $u^a\in C^{0,\alpha}(\Omega_r;\R^n)$. The claim follows from the elementary inequality
    \[
      \|u^a\|_{C^{0,\alpha}(\Omega_r;\R^n)}\lesssim r^{-1}\|u\|_{C^{0,\alpha}(\Omega;\R^n)}\,.
      \]
  \end{proof}

For a given Lipschitz set $E$, we have that $\L^n(u(\partial E))=0$ (see
e.g.~\cite{olbermann2015integrability} Lemma 2.7), and
hence $\left<Ju^a,\chi_E\right> = -\left<\mathbf j u^a,D\chi_E\right>$ is
defined for almost every $a\in\R^n$. In fact, the weak coarea formula and chain rule hold:

\begin{proof}[Proof of Theorem \ref{thm:hoelder}]
  Choose $\tilde \Omega$ such that $E\Subset\tilde\Omega\Subset\Omega$. 
  For $\e>0$ small enough, we have that $\dist(\tilde \Omega,\partial\Omega)>\e$ and we may
  assume that  $u_\e \coloneqq u*\eta_\e$ is defined in $\tilde \Omega$. Choose $\tilde
  \alpha\in (\crit,\alpha)$. With this choice we have that 
   $u_\e\to u$, $F\circ u_\e\to F\circ u$ in $C^{0,\tilde \alpha}(\tilde
   \Omega;\R^n)$, and
   \[
     \left<J(F\circ u_\e),\chi_E\right> \to \left<J(F\circ u),\chi_E\right>\,.
   \]
    Now for $\e>0$, $u_\e$ is smooth, which
   means that by classical change of variables formula we obtain
   \[
     \begin{split}
       \left<J(F\circ u_\e),\chi_E\right>&=\int_{E}\det\nabla F(u_\e(x))\det\nabla u_\e(x)\d x\\
       &=\int_{\R^n}\det\nabla F(a)\deg(u_\e,E,a)\d a\,\,.
     \end{split}
   \]  
   Supposing $a\not \in u_\e(\partial E)$, we have 
   that
   \[
     \begin{split}
       \deg(u_\e, E ,a)&=\frac{1}{\omega_n}\int_{\partial E }(u_\e)^a \cdot \cof \nabla (u_\e)^a \nu\d\H^{n-1}\\
       &=\frac{1}{\omega_n} \left<J(u_\e)^a,\chi_E\right>\,,
     \end{split}
   \]
   where $\nu$ denotes the unit outer normal of $\partial E $, see the
   appendix. By Theorem 1.2 in \cite{olbermann2015integrability}, we have that
   $\deg(u_\e, E ,\cdot)$ is a convergent sequence in $L^1(\R^n)$, and by Lemma
   \ref{lem:estim_bdry_holder} we have that
   \[
     \left<J(u_\e)^a,\chi_E\right>\to \left<Ju^a,\chi_E\right>
   \]
   for almost every $a\in \R^n$. This implies that
   \[
     \begin{split}
              \int_{\R^n}\det\nabla F(a)\left<J (u_\e)^a,\chi_E\right>\d a&\to
              \int_{\R^n}\det\nabla F(a)\left<J
         u^a,\chi_E\right>\d a\,,
     \end{split}
 \]
 proving our claim.
\end{proof}

\begin{remark}
  For $\alpha\in (\frac{n}{n+1},1)$, the strong chain rule holds for $u\in
  C^{0,\alpha}(\Omega;\R^n)$ by the
  inclusion $C^{0,\alpha}\subset W^{\frac{n}{n+1},n+1}$.
\end{remark}

\subsection{Two  open questions linked to the validity of the strong chain
  rule}
\label{sec:two-open-questions}
A very important  and completely open question is whether or not the strong  chain rule holds for functions $u\in C^0\cap W^{s,p}(\Omega;\R^n)$
      with $s\in [\frac {n-1}n,\frac{n}{n+1})$ and $sp\in[n-1,n)$ such that
      $Ju$ can be represented by a non-negative Radon measure.
A positive answer to this question would prove in particular two 
open problems of high profile, concerning the rigidity of $C^{1,\alpha}$ isometric immersions and
the existence of $C^{0,\alpha}$ embeddings of the two-dimensional disk into  the Heisenberg group. In the
present section, we want to briefly explain the open problems and their
relation to the strong chain rule. 

\medskip

\subsubsection{The $C^{1,\alpha}$ Weyl problem} Let $g$ be a (smooth) Riemannian metric with positive curvature on $S^2$. By the
Nash-Kuiper theorem, any short immersion $f:S^2\to\R^3$ can be approximated
arbitrarily well in $C^0$ by an isometric immersion $\bar f\in
C^1(S^2;\R^3)$. In particular, there exists a very large set of solutions to the
isometric immersion problem in the class of  $C^1$-immersions. This is in
stark contrast to the situation when one requires higher regularity: By a result
by Pogorelov \cite{MR0346714}, there exists a unique solution in the class of $C^2$-immersions
(up to Euclidean motions). In other words, $C^2$ isometric immersions are \emph{rigid}.

Concerning  spaces between $C^1$ and $C^2$, Borisov has shown in a series of
works \cite{MR0104277,MR0104278,MR0116295,MR0131225,MR0192449,MR2047871} that
$C^{1,\alpha}$ isometric immersions are rigid if $\alpha>\frac23$, see also
\cite{conti2012h} for a much shorter proof. On the other side, it has
been shown that for $\alpha<\frac{1}{5}$, there exists again a very large set of
$C^{1,\alpha}$ (local) isometric immersions  \cite{MR3850282}. The question
whether or not $C^{1,\alpha}$ isometric immerisons are rigid in the parameter
range $\alpha\in[\frac15,\frac23)$ is open.

\medskip

The rigidity proof in \cite{conti2012h} is   based on proving that the
immersed surfaces are of \emph{bounded extrinsic curvature}. The core of the
proof consists in  showing the following: Let $U\subset \R^2$ be a coordinate
chart of $S^2$ with Lipschitz boundary, and $\psi\in C^1_c(U)$. Let $\nu:U\to
\R^2$ be a representation of the surface normal in coordinate charts. The
boundedness of extrinsic curvature follows from the identity
  \begin{equation}
    \label{eq:5}
    \int_{U} \psi(\nu(x))\kappa_g(x) \d
  A_g(x)=\int_{\R^2}\psi(y)\deg(\nu,U,y)\d y\,,
\end{equation}
 where   $\kappa_g$ is the Gauss curvature, and $\d A_g$ the surface element of the
 manifold $(S^2,g)$ in the coordinate chart $U$. By the positivity of Gauss
 curvature, the above identity allows for an estimate of the extrinsic curvature. 

 \medskip
 
Let  $f^\psi=(f^\psi_1,f^\psi_2)$ denote a
solution of $\div
f^\psi=\psi$, and let us assume that $f^\psi\in C^1(U;\R^2)$. Let $F^\psi_1(x_1,x_2)=(f^\psi_1(x_1,x_2),x_2)$ and
$F^\psi_2(x_1,x_2)=(x_1,f^\psi_2(x_1,x_2))$. 
Now suppose that
$\nu\in C^{0,\alpha}(\Omega;\R^2)$ with $\alpha>\frac12$.
Note that $C^{0,\alpha}\subset W^{1/2,2}$. 
By the weak chain rule,  
\[
\sum_{i=1}^2\left<J(F^\psi_i\circ \nu),\chi_U\right>=\int_{\R^2}\psi(y)\deg(\nu,U,y)\d y\,.
\]
For $\alpha> \frac23$, we have by the strong chain rule that
\[
 \sum_i J(F^\psi_i\circ \nu)=\sum_i\det DF^\psi_i(\nu)J\nu\,.
\]
Testing this equation with $\chi_U$ gives precisely 
\eqref{eq:5}, and the boundedness of extrinsic curvature follows. If one were
able to show the strong chain rule  also for $\alpha>\frac12=\frac{n-1}{n}$,
the same would be true for this larger parameter range.

\subsubsection{The H\"older mapping problem}
The Heisenberg group $\mathbb H$ can be thought of as one of the simplest examples of a
sub-Riemannian manifold. As a metric space, it can be defined as the pair
$(\R^3,d_{CC})$, where the so-called Carnot-Carath\'eodory distance $d_{CC}$ is defined
as follows: For $p\in\R^3$, define the one-form $\Theta_p=\d x_3+\frac12(x_1\d x_2-x_2\d
x_1)$. A Lipschitz curve $\gamma$ in $\R^3$ is said to be  admissible if
$\Theta(\dot\gamma)=0$ almost everywhere. Then 
$d_{CC}$ is defined by taking the infimum of lengths of  admissible curves,
\[
  d_{CC}(p,q)=\inf\{\mathrm{length}(\gamma)|\,\gamma:[0,T]\to\R^3 \text{
    admissible, }\gamma(0)=p, \,\gamma(T)=q\}\,.
  \]
It can be shown that the so-called Kor\'anyi  metric $d_K$ is Bi-Lipschitz equivalent to
$d_{CC}$,
\[
  d_K(p,q)^4=\left((p_1-q_1)^2+(p_2-q_2)^2\right)^2+\left(p_3-q_3+p_1q_2-p_2q_1\right)^2\,.
    \]
The H\"older mapping problem is to find a $C^{0,\alpha}$ embedding of the
two-dimensional disk into $\mathbb H=(\R^3,d_K)$. It has been shown by Gromov that
this is impossible for $\alpha>\frac23$. On the other hand, any smooth embedding
with respect to the Euclidean distance in $\R^3$ is a  $C^{0,1/2}$ embedding
with with respect to $d_{CC}$. Existence for the range $\alpha\in
(\frac12,\frac23]$ is an open question.

\medskip

The argument for  non-existence if $\alpha>\frac23$ is as follows: If
$\gamma:[0,T]\to\mathbb H$ is a $C^{0,\alpha}$ curve with $\alpha>\frac12$, then
the $x_3$-component of $\gamma$ is completely determined by the $x_1$ and $x_2$
components,
\[
  \gamma_3(t)-\gamma_3(0)=\frac12\int_0^t\left(\gamma_1\d\gamma_2-\gamma_2\d\gamma_1\right)\,,
\]
where the right hand side has to be understood as a Lebesgue-Stieltjes
integral (see Lemma 3.1 in \cite{le2013some} and references therein). It follows
that for $U\subset\R^2$ and a set $\Gamma\subset U$ that is Bi-Lipschitz
equivalent to $S^1$, and $v\in C^{0,\alpha}(U;\mathbb H)$ we have that
  \begin{equation}
    \label{eq:2}
  \frac12\int_\Gamma\left(v_1\d v_2-v_2\d v_1\right)=0\,.
\end{equation}

On the other hand, one can show the following: Let $\pi:\mathbb H\to\R^2$ denote
the projection onto the first two components. If $U\subset\R^2$ is open, and if
$v:U\to\mathbb H$ is a $C^{0,\alpha}$ embedding
with $\alpha>\frac12$, then for every open subset $V\subset U$, there exists a
closed Lipschitz curve $\Gamma$ in $V$ such that the curve $\pi\circ v\circ\Gamma$ defines
a non-vanishing current, see Lemma 3.3 in \cite{le2013some}. We reformulate the
latter:
Let $W$ be the union of the bounded components of $\R^2\setminus\Gamma$. Let
$u \coloneqq \pi\circ v$. Then the conclusion of Lemma 3.3 in \cite{le2013some}  can be rephrased by saying that 
there exists a function $\psi\in C^1_c(\R^2\setminus u(\partial W))$ such that
  \begin{equation}
\label{eq:4}
    \int_{\R^2}\psi(y)\deg(u,W,y)\d y\neq 0\,.
\end{equation}

As above, we may rewrite $\psi=\sum_{i=1}^2 \det\nabla F^\psi_i$, and we obtain by
the weak chain rule 
  \begin{equation}
    \label{eq:3}
  \int_{\R^2}\psi(y)\deg(u,W,y)\d y= \sum_{i=1}^2 \left<J (F^\psi_i\circ
    u),\chi_W\right>\,.
\end{equation}
Note that  \eqref{eq:2}
implies  
that $Ju=0$.  For $\alpha>\frac23$ the strong chain rule
holds; hence \eqref{eq:4} and \eqref{eq:3} form a contradiction. Again, a proof of the strong
chain rule in the range $\alpha\in (\frac12,\frac23]$ would immediately imply
the result for the larger range.
  
\appendix

\section{The weak coarea formula and chain rule for smooth functions}

For $u\in C^1(\Omega;\R^n)$ let $\deg(u,\Omega,\cdot)$ denote the Brouwer degree,
defined at regular points $y\in \R^n\setminus u(\partial\Omega)$ by
\[
\deg(u,\Omega,y) \coloneqq \sum_{x\in u^{-1}(y)}\sgn \det \nabla u(x)\,.
\]
It is well known that for $\psi\in C^1_c(\R^n)$, we have the change of
variables formula
\[
\int_{\R^n}\deg(u,\Omega,y)\psi(y)\d y=\int_\Omega \psi(u(x))\det \nabla
u(x)\d x\,.
\]
The Brouwer degree is constant on the connected components of $\R^n\setminus u(\partial\Omega)$.
For $a\in \R^n\setminus u(\partial\Omega)$, choose $\e_a$ such that
$\dist(a,u(\partial\Omega))>\e_a$. We may choose the test function $\psi_a \in C_c^0(\R^n)$ such that $\int_{\R^n}\psi_a(y)\d y=1$,
$\psi_a=\div V_a$ for some $V_a\in C^1(\R^n;\R^n)$ with $V_a(y)=\frac{1}{n\omega_n} \frac{y-a}{|y-a|^n}$ for $y\in\R^n\setminus B(a,\e_a)$. The
 latter implies $\psi_a=0$ on $\R^n\setminus B(a,\e_a)$, and 

  \begin{equation}
\begin{split}
  \deg(u,\Omega,a)&= \int_{\R^n} \deg(u,\Omega,y)\psi_a(y)\d y\\
  &=\int_{\Omega}(\div V_a)(u(x))\det \nabla u(x)\d x\\
  &=\int_{\Omega} \div ((\cof \nabla u(x))^TV_a(u(x)))\d x\\
& =\frac{1}{n \omega_n}\int_{\partial \Omega} (\cof \nabla u(x))^T\frac{u(x)-a}{|u(x)-a|^n}\cdot  \nu(x)\d\H^{n-1}(x)\\
& =\frac{1}{\omega_n}\int_{\partial \Omega} \j u^a(x) \cdot \nu(x)\,\d \H^{n-1}(x)\,,
\end{split}\label{eq:1}
\end{equation}

where $\nu$ denotes the unit outer normal of $\Omega$ and $u^a(x)=\frac{u(x)-a}{|u(x)-a|}$.

Now let $\psi\in C^1_c(\Omega)$, and assume first that $\psi\geq 0$. We write
\[
  E_t \coloneqq \{x\in\Omega:\psi(x)>t\}
\]
and have that $\psi(x)=\int_0^\infty\chi_{E_t}(x)\d t$ for every $x\in\Omega$. Using the above, Fubini's Theorem and the $BV$ coarea formula, we get
\[
  \begin{split}
  \langle Ju,\psi\rangle &= \int_{\Omega}\psi(x)\det\nabla u(x)\d x\\
  &=\int_{\Omega}\int_0^\infty\chi_{E_t}(x) \det\nabla u(x)\d t\d x\\
  &=\int_{\R^n}\int_0^\infty \deg(u,E_t,a)\d t\d a\\
  &=\frac{1}{\omega_n}\int_{\R^n}\int_0^\infty \int_{\partial E_t} \j u^a(x) \cdot \nu(x)\,\d \H^{n-1}(x)\d t\d a\\
&  =-\frac{1}{\omega_n}\int_{\R^n}\int_0^\infty \int_{\Omega} \j u^a(x) \cdot \d(D\chi_{E_t})(x)\d t\d a\\
&=-\frac{1}{\omega_n}\int_{\R^n}\int_{\Omega} \j u^a(x) \cdot \nabla\psi(x)\d x\d a\\
&= \frac{1}{\omega_n} \int_{\R^n} \langle J u^a, \psi \rangle \d a \, .
\end{split}
\]
This is the weak coarea formula. The general case (without the restriction $\psi\geq 0$) is obtained by decomposing into a non-negative and a non-positive part,  $\psi=\max(\psi,0)+\min(\psi,0)$, and noting that the  manipulations above still go through for both  parts of $\psi$ (even though they may not be $C^1$). The weak chain rule is proved in the same way:
\[
  \begin{split}
  \langle J(F\circ u),\psi\rangle &= \int_{\Omega}\psi(x)\det\nabla F(u(x))\det\nabla u(x)\d x\\
  &=\int_{\Omega}\int_0^\infty\chi_{E_t}(x) \det\nabla F(u(x))\det\nabla u(x)\d t\d x\\
  &=\int_{\R^n}\int_0^\infty\det\nabla F(a) \deg(u,E_t,a)\d t\d a\\
&=-\frac{1}{\omega_n}\int_{\R^n}\int_{\Omega}\det\nabla F(a)  \j u^a(x) \cdot \nabla\psi(x)\d x\d a\\
&= \frac{1}{\omega_n} \int_{\R^n} \det \nabla F(a) \langle J u^a, \psi \rangle \d a \,.
\end{split}
\]

\section*{Acknowledgments}
This work has been supported by  Deutsche Forschungsgemeinschaft (DFG, German
Research Foundation) as part of project 350398276.

\bibliographystyle{alpha}
\bibliography{chain}

\end{document}